\renewcommand{\epsilon}{\varepsilon}
\newcommand{\dist}{{\operatorname{dist}}}
\newcommand{\kahler}{K\"ahler }
\newcommand{\PP}{{\mathbb P}}
\newcommand{\R}{{\mathbb R}}
\newcommand{\C}{{\mathbb C}}
\newcommand{\CP}{\C\PP}
\newcommand{\dbar}{\bar\partial}
\newcommand{\ddbar}{\partial\dbar}
\newcommand{\E}{{\mathbf E}}
\newcommand{\Zb}{{\mathbf Z}}
\renewcommand{\phi}{\varphi}
\newcommand{\hcal}{\mathcal{H}}
\newcommand{\ocal}{\mathcal{O}}
\newtheorem{theo}{{Theorem}}[section]
\newtheorem{cor}[theo]{{Corollary}}
\newtheorem{lem}[theo]{{Lemma}}
\newtheorem{prop}[theo]{{Proposition}}
\newenvironment{rem}{\medskip\noindent{\it Remark:\/} }{\medskip}
\title[Logarithmic Bergman kernel]{Logarithmic Bergman kernel and Conditional expectation of Gaussian holomorphic fields }
\author{Jingzhou Sun }
\address{Department of Mathematics, Shantou University, Shantou City, Guangdong
	515063,China} \email{jzsun@stu.edu.cn}
\thanks{The author is partially supported by NNSF of China no.11701353 and the STU Scientific Research Foundation for Talents no.130/760181.}
\date{\today}
\begin{document}

	\begin{abstract}
		We prove the asymptotic of the logarithmic Bergman kernel. And as an application, we calculate the conditional expectation of density of zeros of Gaussian random sections of powers of a positive line bundle that vanish along a fixed smooth subvariety.
	\end{abstract}

	\maketitle
	
	\tableofcontents
	\section{Introduction}
	
	Let $(M,L)$ be a polarized \kahler manifold of dimension $m$. We endow $L$ with a Hermitian metric $h$ with positive curvature. And we use $\omega=\frac{i}{2}\Theta_h$ as the \kahler form. By abuse of notation, we still use $h$ to denote the induced metric on the $k$-th power $L^k$. Then we have a Hermitian inner product on $H^0(M,L^k)$, defined by
	$$<s_1,s_2>=\int_M h(s_1,s_2)\frac{\omega^m}{m!}$$
	 Let $\{s_i\}$ be an orthonormal basis of $H^0(M,L^k)$. Then the on-diagonal Bergman kernel $$\rho_k(z)=\sum |s_i(z)|_h^2$$ has very nice asymptotic expansion by
	the results of Tian, Zelditch, Lu, etc. \cite{Tian1990On, Zelditch2000Szego, Lu2000On, Catlin, MM}. 
	Recall that
	$$\rho_k(z)=\frac{k^m}{\pi^m}[1+\frac{S(z)}{2k}+O(\frac{1}{k^2})]$$
	where $S(z)$ is the scalar curvature of the Riemannian metric associated to $\omega$, and the other coefficients are all functions of the derivatives of the Riemannian curvature tensor.
	Let $V$ be a subvariety(subscheme) of $M$, we denote by $\hcal_{k,V}$ be the subspace of $H^0(M,L^k)$  consisting of sections that vanish along $V$. We will call the Bergman kernel of $\hcal_{k,V}$ the k-th logarithmic Bergman kernel of $(M,L,V)$, denoted by $\rho_{k,V}$.
	
	\

	The asymptotics of on- and off-diagonal Bergman kernel have been extensively used in the  value distribution theory of sections of line bundles by Shiffman-Zelditch and others\cite{bsz0,bsz1,bsz2,bsz3,sz1,sz2,sz3,sz4,sz6,sz7,dsz1,dsz2,dsz3,FengConditional,Feng2019}. Of closest relation to this article are \cite{bsz0,bsz1,bsz2,bsz3,FengConditional}. In \cite{bsz0,bsz1,bsz2,bsz3}, the n-point correlation functions of Gaussian random holomorphic sections of $H^0(M,L^k)$ were calculated, together with the scaling limits of these correlation functions, which very interestingly have universality among manifolds of the same dimension. In particular, in the case of Riemann surfaces, they showed that the scaling limit of pair correlation depends only on the distance of the pair of points. In \cite{FengConditional}, in the case of Riemann surfaces, Feng calculated the conditional expectation of density of critical points given a fixed zero and the conditional expectation of zeros given a fixed critical point of Gaussian random holomorphic sections of $H^0(M,L^k)$. He also calculated the rescaling limits, which also exhibit universality. In particular, the universal scaling limit of conditional expectation of density of zeros with a fixed critical point depend also only on the distance between a point and the given critical point.  It was first shown in \cite{Shiffman1999} by Shiffman and Zeldtich that the mean of the zero currents $[Z_s]$ for $s\in H^0(M,L)$ endowed with the Gaussian measure is the pull back of the Fubini-Study form $\omega_{FS}$, whose difference from $\omega$ is just the $i\ddbar\log $ of the Bergman kernel. So it is not surprising that the conditional expectation of zeros is very closely related to the logarithmic Bergman kernel.
	
	\
	
	 In this article, we will study the asymptotics of logarithmic Bergman kernels. And as an application, we will calculate the conditional expectation of density of zeros of Gaussian random holomorphic sections of $H^0(M,L^k)$ which vanishes along a fixed smooth subvariety $V$ of $M$. The logarithmic Bergman kernel behaves like Bergman kernel for the singular metric \cite{SunSun,Punctured, Sun2019} in the sense that it behaves very much like the smooth case ``away" from the subvariety $V$, while it exhibit very different nature ``around" $V$. We use the notation $\epsilon(k)$ to mean a term that is bounded by $Ck^{-r}$ for all $r$, which becomes invisible in any asymptotic expansion in inverse powers of $k$.
	 
	 \
	 
	  ``Away" from $V$, we have
	 \begin{theo}\label{main1}
	 	For $z\in M$, let $r=d(z,V)$ be the distance. Then
	 	when $r\geq \frac{\log k}{\sqrt{k}}$, we have	$$\rho_{k,V}(z)=\rho_k(z)-\epsilon(k).$$
	 	In particular, $\rho_{k,V}(z)$ has the same asymptotic expansion as $\rho_k(z)$
	 \end{theo}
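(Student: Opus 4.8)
The idea is to compare the two Bergman kernels via the extremal (reproducing-kernel) characterization of the on-diagonal value, and to use the fact that a section vanishing on $V$ costs almost nothing in $L^2$-norm when one is far from $V$. Recall that for any $z$,
$$\rho_k(z)=\max\Big\{|s(z)|_h^2 : s\in H^0(M,L^k),\ \|s\|_{L^2}=1\Big\},$$
and likewise $\rho_{k,V}(z)$ is the same maximum restricted to $s\in\hcal_{k,V}$. Since $\hcal_{k,V}\subset H^0(M,L^k)$, the inequality $\rho_{k,V}(z)\le\rho_k(z)$ is automatic; the whole content of the theorem is the reverse, up to $\epsilon(k)$. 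So I would fix $z$ with $r=d(z,V)\ge\frac{\log k}{\sqrt k}$, take the extremal section $s_z$ for $\rho_k(z)$ (the ``coherent state'' / peak section at $z$), and \emph{modify} it to a section that genuinely vanishes on $V$ while changing its value at $z$ and its norm only by $\epsilon(k)$.

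**Key steps.** First, recall the standard near-diagonal decay of the Bergman kernel (Agmon-type estimate, cf. the references \cite{Tian1990On, Lu2000On, MM} and the off-diagonal estimates used in \cite{bsz0,bsz1,bsz2}): the extremal section $s_z$ has $|s_z(w)|_h \le C k^{m/2} e^{-c\sqrt k\, d(z,w)}$, and the same Gaussian decay holds for its covariant derivatives (up to polynomial factors in $k$ and $\sqrt k$). In particular, the mass of $s_z$ concentrated within distance $\frac12 r$ of $V$ is $O\big(k^{A}e^{-c\sqrt k\, r}\big)\le O\big(k^{A}e^{-c\log k}\big)$, which is $\epsilon(k)$ once $r\ge\frac{\log k}{\sqrt k}$ — here one must be a touch careful that the constant $c$ beats every polynomial power of $k$, which is why the threshold is $\frac{\log k}{\sqrt k}$ rather than a fixed multiple of it; shrinking $c$ if necessary handles the polynomial prefactor. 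Second, produce from $s_z$ an element of $\hcal_{k,V}$. The cleanest route is an $L^2$ (Hörmander / Ohsawa–Takegoshi) argument: multiply $s_z$ by a cutoff $\chi$ that is $1$ outside the $\frac12 r$-neighborhood of $V$ and $0$ in a smaller neighborhood, so $\chi s_z$ is smooth, agrees with $s_z$ near $z$, and vanishes near $V$; then $\dbar(\chi s_z)=(\dbar\chi)s_z$ is supported where $|s_z|_h$ is already $\epsilon(k)$, with $\|\dbar(\chi s_z)\|$ controlled (the $\frac1r$ blow-up of $|\dbar\chi|$ is harmless against the exponential). Solving $\dbar u = \dbar(\chi s_z)$ with the curvature-twisted Hörmander estimate (the curvature of $L^k$ gives the $\frac1k$ gain, and we may further add the weight $\log|\sigma_V|^2$, or work on $M\setminus V$, to force $u|_V=0$), we get $\|u\|_{L^2}=\epsilon(k)$; then $\tilde s_z:=\chi s_z-u$ lies in $H^0(M,L^k)$, vanishes on $V$, and satisfies $\tilde s_z(z)=s_z(z)-u(z)$ with $|u(z)|=\epsilon(k)$ by the sub-mean-value inequality for the holomorphic $u$ applied on a ball of radius $\sim k^{-1/2}$ around $z$, where $\chi\equiv 1$. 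Hence $\|\tilde s_z\|=1+\epsilon(k)$ and $|\tilde s_z(z)|_h^2=\rho_k(z)-\epsilon(k)$, which gives $\rho_{k,V}(z)\ge\rho_k(z)-\epsilon(k)$ and closes the argument.

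**Main obstacle.** The delicate point is the $\dbar$-correction: one must solve $\dbar u=\dbar(\chi s_z)$ in a way that \emph{simultaneously} (i) keeps $\|u\|_{L^2}=\epsilon(k)$ with constants uniform in $k$, (ii) forces $u$ (hence $\tilde s_z$) to vanish \emph{along all of $V$}, not just at points, and (iii) lets one pass from the $L^2$ bound on $u$ to a pointwise bound $|u(z)|_h=\epsilon(k)$ with the right power of $k$. Point (ii) is where smoothness of $V$ is used: near a smooth $V$ one has a local defining section and can incorporate the weight $(\operatorname{codim} V)\log|\sigma_V|^2_{h_V}$ into Hörmander's estimate (an Ohsawa–Takegoshi-type weight), making any $L^2$ solution automatically vanish on $V$; alternatively one solves on the complete \kahler manifold $M\setminus V$ and checks the solution extends. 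Point (iii) is routine interior elliptic/sub-mean-value estimate for holomorphic sections once the metric is rescaled at scale $k^{-1/2}$, contributing only a $k^{m}$ factor that is again absorbed into $\epsilon(k)$. I expect the bookkeeping of constants in the Hörmander estimate — ensuring the curvature lower bound on $\Theta_{h^k}\otimes(\text{weight})$ is $\gtrsim k\,\omega$ uniformly — to be the one genuinely technical step; everything else is a packaging of the standard Bergman-kernel off-diagonal decay already available from \cite{MM, Lu2000On}.
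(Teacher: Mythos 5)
Your overall strategy---take the peak section $s_z$, kill it near $V$ with a cutoff, and correct by a weighted $\dbar$-equation whose singular weight forces the correction to vanish along $V$---is a legitimate route and genuinely different from the paper's. The paper instead decomposes $s_z=s_1+s_2$ orthogonally with respect to $\hcal_{k,V}$, uses Theorem~\ref{theoR} (itself a consequence of the Ohsawa--Takegoshi--Manivel extension theorem, via the fact that $s_2$ is the minimal-norm extension of $s_z|_V$) to conclude $\parallel s_2\parallel^2=\epsilon(k)$ from the smallness of $s_z|_V$ in $L^2(V)$, and then evaluates $|s_1(z)|_h=|\langle s_1,s_z\rangle|\,|s_z(z)|_h=(1-\epsilon(k))|s_z(z)|_h$ by the extremal/reproducing property of $s_z$. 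Your version trades the extension theorem for a direct H\"ormander construction of an element of $\hcal_{k,V}$; both routes need the same analytic input, namely that $s_z$ is negligible near $V$. One technical caveat in your scheme: for $\codim V\ge 2$ there is no line-bundle section cutting out $V$, so the weight ``$(\codim V)\log|\sigma_V|^2$'' must be replaced by something like $(m-n)\log r^2$ with $r=d(\cdot,V)$ (exactly the $\Psi$ used in the paper's Ohsawa--Takegoshi setup) or a logarithm of a sum of squares of local generators; this is routine but has to be said, both for the uniform curvature lower bound and for the conclusion that $L^2$-finiteness against the weight forces vanishing along all of $V$.

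There is, however, a genuine quantitative gap at your very first step. You quote the Agmon-type decay $|s_z(w)|_h\le Ck^{m/2}e^{-c\sqrt{k}\,d(z,w)}$ and claim that for $r\ge\frac{\log k}{\sqrt{k}}$ the mass of $s_z$ near $V$ is $O(k^{A}e^{-c\log k})=\epsilon(k)$. It is not: $k^{A}e^{-c\log k}=k^{A-c}$ is a fixed negative power of $k$, whereas $\epsilon(k)$ must beat every power; and ``shrinking $c$'' goes the wrong way---no fixed $c$ can help. With only the $e^{-c\sqrt{k}\,d}$ bound your argument yields $\rho_{k,V}(z)=\rho_k(z)(1+O(k^{A-c}))$ at the stated threshold, which is weaker than the theorem. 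The fix is to use the Gaussian-regime decay that the paper relies on: in normal coordinates the peak section satisfies $|s_z(w)|_h^2\approx\frac{k^m}{\pi^m}e^{-k\,d(z,w)^2}$, and at $d\ge\frac{\log k}{\sqrt{k}}$ this is $e^{-(\log k)^2}=k^{-\log k}=\epsilon(k)$; equivalently, invoke part (ii) of the quoted Shiffman--Zelditch estimate, which gives $|\Pi_k(z,w)|=O(k^{-l})$ for every $l$ once $d(z,w)\ge b\sqrt{\log k/k}$ with $b>\sqrt{2l+2m}$, and at $d\ge\frac{\log k}{\sqrt{k}}$ one may take $b=\sqrt{\log k}\to\infty$, so this holds for all $l$ uniformly. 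With that substitution (and the codimension remark above), your cutoff-plus-$\dbar$ argument closes and gives the same conclusion as the paper's proof.
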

And ``around" $V$, we have 	
our main result
\begin{theo}\label{main2}
	For $z\in M$, let $r=d(z,V)$ be the distance. Then
	\begin{itemize}
		\item[$\bullet$] when $r\leq \frac{\sqrt{\log k}}{\sqrt{2k}}$, we have	$$\frac{\rho_{k,V}}{\rho_k}(z)=(1-e^{-kr^2})(1+R_k(z))$$
		where $|R_k(z)|\leq C_\epsilon\frac{kr^2}{k^{-1/2+\epsilon}}$ for any $\epsilon>0$.
		\item[$\bullet$]When $r$ satisfies
		$$ \frac{\sqrt{\log k}}{\sqrt{2k}}<r< \frac{\log k}{\sqrt{k}}.$$
		Then we have	$$\frac{\rho_{k,V}}{\rho_k}(z)=1-O(e^{-kr^2})$$
	\end{itemize}

\end{theo}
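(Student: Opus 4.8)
The plan is to reduce the theorem to the behavior of the orthogonal complement. Write $\hcal_{k,V}^{\perp}$ for the orthogonal complement of $\hcal_{k,V}$ in $H^0(M,L^k)$ and let $\rho_{k,V^{\perp}}$ be its Bergman kernel. Splitting an orthonormal basis of $H^0(M,L^k)$ into bases of $\hcal_{k,V}$ and of $\hcal_{k,V}^{\perp}$ gives the exact identity $\rho_k=\rho_{k,V}+\rho_{k,V^{\perp}}$, so the theorem is equivalent to
$$\frac{\rho_{k,V^{\perp}}(z)}{\rho_k(z)}=e^{-kr^{2}}\bigl(1+\widetilde R_k(z)\bigr)\ \text{ for }r\le\tfrac{\sqrt{\log k}}{\sqrt{2k}},\qquad \frac{\rho_{k,V^{\perp}}(z)}{\rho_k(z)}=O\bigl(e^{-kr^{2}}\bigr)\ \text{ for }\tfrac{\sqrt{\log k}}{\sqrt{2k}}<r<\tfrac{\log k}{\sqrt k},$$
where $\widetilde R_k$ and the statement's $R_k$ determine one another through the elementary identity $1-e^{-kr^2}(1+\widetilde R_k)=(1-e^{-kr^2})(1+R_k)$. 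Writing $\Pi_k(\cdot,\cdot)$ for the off-diagonal Bergman kernel, the reproducing property identifies $\hcal_{k,V}^{\perp}$ with the span of the coherent states $\{\Pi_k(\cdot,w):w\in V\}$ --- their common orthocomplement being exactly the sections vanishing on $V$ --- so, for $k\gg 0$ (where restriction to $V$ is onto by Serre vanishing), every element of $\hcal_{k,V}^{\perp}$ is concentrated in a tube about $V$. A short Cauchy--Schwarz argument is also convenient: if $u$ is the unit extremal section for $\rho_k$ at $z$ and $u=u_1+u_2$ its orthogonal decomposition along $\hcal_{k,V}\oplus\hcal_{k,V}^{\perp}$, then equality in $\rho_k=\rho_{k,V}+\rho_{k,V^{\perp}}$ forces $u_1,u_2$ to be extremal for $\rho_{k,V},\rho_{k,V^{\perp}}$ at $z$ with $|u(z)|_h=|u_1(z)|_h+|u_2(z)|_h$, so the whole problem reduces to computing the two numbers $\|u_2\|^2$ and $|u_2(z)|_h$.

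I would compute these in the model. Fix $z$, choose $w_0\in V$ with $\dist(z,w_0)=r$ together with Kähler normal coordinates $(\zeta',\zeta'')\in\C^{m-c}\times\C^{c}$ at $w_0$ adapted to $V$ (here $c=\operatorname{codim}V$), so that $V=\{\zeta''=0\}+O(|\zeta|^2)$, $z=(0,\zeta_0'')$ with $|\zeta_0''|=r+O(r^2)$, and a local frame of $L$ in which the weight is $|\zeta|^2+O(|\zeta|^4)$. In the exact flat model on $\C^m$ with weight $e^{-k|\zeta|^2}$ the subspace $\hcal^{\perp}$ is exactly the functions of $\zeta'$ alone, and one computes directly $\|u_2\|^2=e^{-k|\zeta_0''|^2}$ and $|u_2(z)|_h^2=\rho_k(z)\,e^{-2k|\zeta_0''|^2}$, hence the model ratios $\rho_{V^{\perp}}/\rho=e^{-kr^2}$ and $\rho_V/\rho=1-e^{-kr^2}$. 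Then I would invoke the off-diagonal Bergman kernel asymptotics of Tian, Catlin, Zelditch and of Ma--Marinescu (localization together with $\sqrt k$-rescaling) on the ball of radius $\log k/\sqrt k$ about $w_0$: there $u$, the projector onto $\hcal_{k,V}^{\perp}$, and $u_2$ are uniformly close to their model counterparts, and this transports the model identities to $(M,L,V)$, the remainder absorbing the $O(k^{-1/2+\epsilon})$ of the kernel expansion, the second fundamental form of $V\subset M$, and the quartic part of the potential.

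The heart of the matter is this remainder estimate, and it has two layers. The first is globalization: $u_2=P_{\hcal_{k,V}^{\perp}}u$ and the Gram-type operator obtained by restricting $\Pi_k$ to $V\times V$ (which is comparable to $k^{c}$ times the identity on the Bergman space of $(V,L^k|_V)$) depend, up to $\epsilon(k)$, only on the part of $V$ within $\log k/\sqrt k$ of $w_0$, because both $u$ and the coherent states decay like $e^{-\frac k2\dist(\cdot,\cdot)^2}$; this is precisely the estimate underlying Theorem~\ref{main1}, where $r\ge\log k/\sqrt k$ forces $e^{-kr^2}\le e^{-(\log k)^2}=\epsilon(k)$ and hence $\rho_{k,V}=\rho_k-\epsilon(k)$. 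The subtler layer, which I expect to be the main obstacle, is the order of vanishing of $\widetilde R_k$ as $r\to0$: every $s\in\hcal_{k,V}$ vanishes on $V$, so $\rho_{k,V}(z)=0$ and $\rho_{k,V^{\perp}}(z)=\rho_k(z)$ for $z\in V$, which forces the relative error in $\rho_{k,V^{\perp}}/\rho_k=e^{-kr^2}$ to vanish at $r=0$; capturing this means expanding the coherent states, the projection, and the defining functions of $V$ one order past the leading term, and it is this bookkeeping that produces the factor $kr^2$ in $R_k$.

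In the intermediate range $\tfrac{\sqrt{\log k}}{\sqrt{2k}}<r<\tfrac{\log k}{\sqrt k}$ only a one-sided bound is needed: estimating $|s(z)|_h^2$ for unit $s\in\hcal_{k,V}^{\perp}$ by the off-diagonal decay of $\Pi_k$ yields $\rho_{k,V^{\perp}}(z)\le Ce^{-kr^2}\rho_k(z)$, hence $\rho_{k,V}(z)/\rho_k(z)=1-O(e^{-kr^2})$; here $1-e^{-kr^2}$ is bounded away from $0$, so no division is involved, in contrast with the first range where $1-e^{-kr^2}\sim kr^2$ can be as small as $\sim k^{-1/2}\log k$ and must be divided out to pass from $\rho_{k,V^{\perp}}/\rho_k=e^{-kr^2}(1+\widetilde R_k)$ to the stated form. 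Assembling the three regimes --- $r\le\tfrac{\sqrt{\log k}}{\sqrt{2k}}$, the intermediate band, and $r\ge\tfrac{\log k}{\sqrt k}$ handled by Theorem~\ref{main1} --- completes the proof.
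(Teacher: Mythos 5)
Your reduction $\rho_k=\rho_{k,V}+\rho_{k,V^{\perp}}$, the Bargmann--Fock model computation giving the ratio $e^{-kr^2}$, and the peak-section treatment of the intermediate band are all sound, and the intermediate range is essentially the paper's own argument. One correction there: the bound $\rho_{k,V^{\perp}}(z)\le Ce^{-kr^2}\rho_k(z)$ does \emph{not} follow from "off-diagonal decay of $\Pi_k$" applied to a unit $s\in\hcal_{k,V}^{\perp}$, because such an $s$ is a combination of coherent states centered all over $V$ with a priori uncontrolled coefficients. The correct route is the one hidden in your parenthetical Gram-operator claim: the projection $u_2=P_{\hcal_{k,V}^{\perp}}u$ of the peak section $u$ at $z$ is the unique element of $\hcal_{k,V}^{\perp}$ with $u_2|_V=u|_V$, the off-diagonal decay bounds $\|u|_V\|^2_{L^2(V)}=O(k^{m-n})e^{-kr^2}$, and then the Ohsawa--Takegoshi based estimate of Lemma \ref{theoR} converts this into $\|u_2\|^2=O(e^{-kr^2})$. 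That lower bound on the restriction/Gram operator is not elementary and must be invoked explicitly; only the opposite inequality follows from local sub-mean-value arguments.

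The genuine gap is in the first bullet, which is the main case. Your argument there rests on the assertion that near the foot point $w_0$ the projector onto $\hcal_{k,V}^{\perp}$ is "uniformly close to its model counterpart", with the remaining work dismissed as bookkeeping; but this closeness is precisely the content of the theorem and does not follow from the off-diagonal expansion of $\Pi_k$ together with two-sided $O$-type bounds on the Gram operator. Those inputs yield only $\rho_{k,V^{\perp}}(z)\le Ce^{-kr^2}\rho_k(z)$ with an uncontrolled constant, which is vacuous in the regime $kr^2=O(1)$ where $e^{-kr^2}\asymp 1$ (there $1-Ce^{-kr^2}$ can even be negative); what the near range requires is the sharp constant-one upper bound $\rho_{k,V^{\perp}}/\rho_k\le e^{-kr^2}\bigl(1+o(1)\bigr)$, i.e.\ that nothing in $\hcal_{k,V}^{\perp}$ beyond the single coherent state at $w_0$ contributes at $z$. (The easy half, $\rho_{k,V^{\perp}}(z)\ge P_k^2(z,w_0)\rho_k(z)$ from $s_{w_0}\in\hcal_{k,V}^{\perp}$, you do get for free.) The paper closes exactly this hole by introducing the normal-derivative sections $\alpha_v$, proving via Lemma \ref{theoR} that they are almost orthogonal to $\hcal_{k,V}^{\perp}$ (Lemma \ref{almostortho}), and integrating along the normal disk to obtain the two-sided sandwich $1-\frac{P_k^2(v,z_0)}{1-\beta(r)^2}\le \frac{\rho_{k,D}}{\rho_k}(v)\le 1-P_k^2(v,z_0)$, after which the Shiffman--Zelditch asymptotics of the normalized kernel $P_k$ finish the job; your proposal contains no substitute for this step. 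Finally, the error conversion you flag is quantitatively demanding: since $1-e^{-kr^2}\sim kr^2$ must be divided out, the stated bound on $R_k$ forces the relative error of $\rho_{k,V^{\perp}}/\rho_k$ against $e^{-kr^2}$ to vanish quadratically in $kr^2$ (up to $k^{-1/2+\epsilon}$), and a generic $O(k^{-1/2+\epsilon})$ closeness of projectors cannot produce that factor. As written, the first bullet is therefore not proved.
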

	We would like to comment that like the asymptotic of the Bergman kernel in the smooth case, the asymptotic around $V$ also depend on mainly on the geometry of the manifolds $M$ and $V$. But unlike the smooth case where the geometry only kicks in from the second term, the dependence on the geometry appears from the first term in the asymptotic of the logarithmic Bergman kernel near $V$. When $V$ has singularities, the asymptotic should be more interesting near the singularities.
	Our proof depends on two important tools: the Ohsawa-Takegoshi-Manivel extension theorem and the asymptotic of the off-diagonal Bergman kernel developed by Shiffman-Zelditch etc. 
	
	\
	
	It is necessary to compare our logarithmic Bergman kernel to the partial Bergman kernel studied by Zelditch-Zhou 	\cite{ZelditchZhou-Interface,ZelditchZhou2019}, Ross-Singer\cite{Ross2017} and Coman-Marinescu\cite{ComanMarinescu}, etc. The partial Bergman kernel is for the space of sections that vanishes along a subset to a order than grows with the power $k$, while our logarithmic Bergman kernel vanishes to order 1(or one can study other fixed orders). So it appears to the author that partial Bergman kernel is more ``analytic", while our logarithmic Bergman kernel is more ``algebraic". Also, so far, the author have not been able to find results for partial Bergman kernel that vanishes a long a submanifold of codimension $\geq 1$.
	
	\
	
	Recall that the expectation of density of zeros of sections of a line bundle is defined as a $(1,1)$-current. More precisely, given a line bundle $L\to M$, with $\dim M=m$, and a Hermitian inner product $H$ on $H^0(M,L)$, we have a complex Gaussian measure $d\mu$ on $H^0(M,L)$. Let $Z_s$ denote the zero variety of $s\in H^0(M,L)$. Then the expectation is defined as
$$\E(Z_s)(f)=\int_{H^0(M,L)}d\mu(s)\int_{Z_s}f$$
for any smooth $(m-1,m-1)$-form $f$ with compact support. Given a subset $V$ of $M$, the conditional expectation of density of zeros of sections of $L^k$ is denoted by $\Zb_k(z|V)$, defined by
$$\Zb_k(z|V)(f)=\E_{(H^0(M,L^k),d\mu_k)}(\int_{Z_s}f|\quad s|_V=0), $$
for any smooth $(m-1,m-1)$-form $f$ with compact support, where $d\mu_k$ is the complex Gaussian measure on $H^0(M,L^k)$ corresponding to the Hermitian inner product on this space. The inner product on $\hcal_{k,V}$ is inherited from that on $H^0(M,L^k)$. We denote by $d\gamma_k$ the induced complex Gaussian measure on $\hcal_{k,V}$, then we have the following
$$\E_{(H^0(M,L^k),d\mu_k)}(\int_{Z_s}f|\quad s|_V=0)=\E_{(\hcal_{k,V},d\gamma_k)}(\int_{Z_s}f)$$
Then it follows from \cite{Shiffman1999} and \cite{Sun2019GA} that 
\begin{prop}
	$$\Zb_k(z|V)=\frac{i}{2\pi}\ddbar \log \rho_{k,V}+k\omega.$$
\end{prop}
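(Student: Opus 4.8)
The plan is to obtain the proposition by combining the identity recorded just before it---the one identifying the conditional expectation $\Zb_k(z|V)$ with the \emph{unconditioned} expected zero current of the Gaussian ensemble $(\hcal_{k,V},d\gamma_k)$---with the Shiffman--Zelditch formula for the mean of the zero current of a Gaussian holomorphic field, applied here to the \emph{subspace} $\hcal_{k,V}\subseteq H^0(M,L^k)$ rather than to the full space. Concretely, it suffices to show that for every smooth, compactly supported $(m-1,m-1)$-form $f$,
$$\E_{(\hcal_{k,V},d\gamma_k)}\Big(\int_{Z_s}f\Big)=\Big(\frac{i}{2\pi}\ddbar\log\rho_{k,V}+k\om\Big)(f),$$
which is exactly the subspace version of the computation of \cite{Shiffman1999}, carried out in \cite{Sun2019GA}; I recall the mechanism below.

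Fix $z$ in a contractible chart $U$ over which $L^k$ is trivialized by a holomorphic frame $e_U$ with $|e_U|_{h^k}^2=e^{-\phi}$, and write a section as $s=fe_U$. Two ingredients drive the computation. First, Poincar\'e--Lelong: for $\gamma_k$-almost every $s$ (those with $s\not\equiv0$, so that $Z_s$ is a divisor) one has, as currents on $U$,
$$[Z_s]=\frac{i}{\pi}\ddbar\log|s|_{h^k}+\frac{i}{2\pi}\ddbar\phi=\frac{i}{\pi}\ddbar\log|s|_{h^k}+k\om,$$
the second term being the first Chern form of $(L^k,h^k)$ in the normalization of the introduction. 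Second, the covariance structure of the field: in the unit frame $e_U$, evaluation $s\mapsto s(z)$ pushes $d\gamma_k$ forward to a centered complex Gaussian on $\C$ whose variance is the Bergman kernel value $\rho_{k,V}(z)=\sum_j|s_j(z)|_{h^k}^2$ for an orthonormal basis $\{s_j\}$ of $\hcal_{k,V}$, whence
$$\E_{(\hcal_{k,V},d\gamma_k)}\big[\log|s(z)|_{h^k}\big]=\tfrac12\log\rho_{k,V}(z)+c,$$
with $c=\E\log|\zeta|$, $\zeta$ a standard complex Gaussian, a universal constant. Taking $\E$ of the Poincar\'e--Lelong identity, interchanging the expectation with $\int(\cdot)\wedge f$ and with $\ddbar$, and noting that the additive constant $c$ is annihilated by $\ddbar$, yields $\E(Z_s)=\frac{i}{2\pi}\ddbar\log\rho_{k,V}+k\om$ on $U$; since $U$ was arbitrary and $f$ has compact support, this gives the global identity, and the proposition follows via the reduction above.

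I expect the delicate point to be the legitimacy of the two interchanges (expectation with $\ddbar$, and with integration against $f$)---a Fubini argument---together with the fact that $\rho_{k,V}$ \emph{vanishes along} $V$, so $\log\rho_{k,V}$ fails to be smooth there. For the Fubini step one needs a bound on $\E\,\|\log|s|_{h^k}\|_{L^1(K)}$ for $K\Subset U$: this comes from the sub-mean-value property of the plurisubharmonic function $\log|s|_{h^k}$ (an $L^1$ lower bound via an average) together with a standard Gaussian tail estimate for $\sup_K|s|_{h^k}$ (the upper bound), precisely the estimates used in \cite{Shiffman1999}. For the behaviour near $V$, since sections of $\hcal_{k,V}$ vanish to first order along $V$ one has $\rho_{k,V}\asymp\dist(\cdot,V)^2$ there, so $\log\rho_{k,V}\in L^1_{loc}$ and $\ddbar\log\rho_{k,V}$ is a well-defined current; when $\codim V=1$ it carries the expected $[V]$-component, matching the fact that $V$ is a common component of every divisor $Z_s$ with $s\in\hcal_{k,V}$, while for $\codim V\geq2$ no such singular contribution appears. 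This bookkeeping near $V$, rather than the routine Fubini argument, is the part I would take care to spell out; it is handled in \cite{Sun2019GA}.
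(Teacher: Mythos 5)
Your proposal is correct and follows essentially the same route as the paper, which simply invokes \cite{Shiffman1999} and \cite{Sun2019GA}: reduce the conditional expectation to the unconditioned mean zero current of the Gaussian ensemble on $\hcal_{k,V}$, then run the Shiffman--Zelditch computation (Poincar\'e--Lelong plus $\E\log|s(z)|_{h^k}=\tfrac12\log\rho_{k,V}(z)+c$) for the subspace, with the $L^1_{loc}$/Fubini justifications and the bookkeeping of the singularity of $\log\rho_{k,V}$ along $V$ handled as in those references. Nothing in your argument deviates from what the cited proofs do, so no further comparison is needed.
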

Therefore, as a corollary of theorem \ref{main1}, we have
\begin{cor}\label{cor1}
	For $z\in M\backslash V$, we have 
	$$\Zb_k(z|V)=k\omega(1+O(\frac{1}{k^2})).$$
	
\end{cor}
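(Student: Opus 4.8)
The plan is to combine the Proposition just stated with Theorem~\ref{main1} and the Tian--Yau--Zelditch expansion of $\rho_k$. Fix $z_0\in M\setminus V$ and put $r_0=d(z_0,V)>0$. Since $\frac{\log k}{\sqrt k}\to 0$, for all sufficiently large $k$ every point of the ball $B=B(z_0,r_0/2)$ lies at distance $>\frac{\log k}{\sqrt k}$ from $V$, so Theorem~\ref{main1} gives $\rho_{k,V}=\rho_k-\epsilon(k)$ on $B$. Together with the lower bound $\rho_k\ge c\,k^m$ on $B$ coming from the expansion $\rho_k=\frac{k^m}{\pi^m}[1+\frac{S}{2k}+O(k^{-2})]$, this yields (in particular $\rho_{k,V}>0$ on $B$, so $\log\rho_{k,V}$ is defined there) $\log\rho_{k,V}=\log\rho_k-\epsilon(k)$ on $B$. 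Granting that this identity holds in the $C^\infty$ topology on a neighbourhood of $z_0$ (see the last paragraph), we obtain $\frac{i}{2\pi}\ddbar\log\rho_{k,V}=\frac{i}{2\pi}\ddbar\log\rho_k+\epsilon(k)$ near $z_0$.

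The conclusion is then immediate. By the Tian--Yau--Zelditch expansion, $\log\rho_k=\log\frac{k^m}{\pi^m}+\frac{S(z)}{2k}+O(k^{-2})$ in $C^\infty_{\mathrm{loc}}(M)$, so applying $\ddbar$ kills the constant and leaves $\frac{i}{2\pi}\ddbar\log\rho_k=\frac{1}{2k}\cdot\frac{i}{2\pi}\ddbar S+O(k^{-2})=O(k^{-1})$ as a $(1,1)$-form near $z_0$. Hence, by the above Proposition, $\Zb_k(z_0|V)=k\omega+\frac{i}{2\pi}\ddbar\log\rho_{k,V}=k\omega+O(k^{-1})=k\omega\bigl(1+O(k^{-2})\bigr)$, where the error is measured relative to the scale $k\omega$. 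This is Corollary~\ref{cor1}.

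The one point that requires care — and the main obstacle — is upgrading the pointwise estimate $\rho_k-\rho_{k,V}=\epsilon(k)$ of Theorem~\ref{main1} to a $C^\infty$ (or just $C^2$) estimate on $B$, so that it survives two differentiations. Write $\rho_k-\rho_{k,V}=\rho_k^\perp$, the Bergman kernel of the finite-dimensional space $\hcal_{k,V}^\perp\subset H^0(M,L^k)$: it is nonnegative, smooth, globally bounded by $\rho_k\le Ck^m$, and $\le\epsilon(k)$ on $B$. In a holomorphic chart and frame near $z_0$ with local weight $\phi$, its analytic representative $\wt K_k(w,w')=\sum_j f_j(w)\overline{f_j(w')}$ is holomorphic in $w$, antiholomorphic in $w'$, satisfies $\wt K_k(w,w)=\rho_k^\perp(w)e^{k\phi(w)}$, and by Cauchy--Schwarz obeys $|\wt K_k(w,w')|\le \rho_k^\perp(w)^{1/2}\rho_k^\perp(w')^{1/2}e^{k(\phi(w)+\phi(w'))/2}$. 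Applying the Cauchy integral estimates on the polydisc of radius $\de_k=k^{-1/2}$ centred at $z_0$ — on which $\phi$ varies by only $O(1)$, so the weight $e^{k\phi}$ stays uniformly comparable to $e^{k\phi(z_0)}$ — bounds the derivatives of $\wt K_k$ up to order two at $z_0$ by $k^{O(1)}\epsilon(k)e^{k\phi(z_0)}$, hence the corresponding derivatives of $\rho_k^\perp=\wt K_k(w,w)e^{-k\phi(w)}$ by $\epsilon(k)$. Equivalently, one may note that the proof of Theorem~\ref{main1}, resting on the off-diagonal Bergman kernel asymptotics of Shiffman--Zelditch, already delivers the estimate in $C^\infty_{\mathrm{loc}}(M\setminus V)$. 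Either way, $\ddbar(\rho_k-\rho_{k,V})=\epsilon(k)$ near $z_0$, which is what was used above.
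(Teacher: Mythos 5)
Your argument is correct and is essentially the paper's own route: combine the Proposition $\Zb_k(z|V)=\frac{i}{2\pi}\ddbar\log\rho_{k,V}+k\omega$ with Theorem \ref{main1} and the Tian--Yau--Zelditch expansion of $\rho_k$, so that away from $V$ the $\ddbar\log\rho_{k,V}$ term contributes only $O(1/k)$ and hence $\Zb_k=k\omega(1+O(k^{-2}))$. Your Cauchy-estimate upgrade of the pointwise bound $\rho_k-\rho_{k,V}=\epsilon(k)$ to a $C^2$ bound is a useful filling-in of a step the paper's one-line proof leaves implicit; just note that for the weight comparison on the radius $k^{-1/2}$ polydisc you should take a normal frame (so the local weight has no linear term at $z_0$), since what must stay $O(1)$ there is the variation of $k\phi$, not of $\phi$.
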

And as a corollary of theorem \ref{main2}, we have
\begin{cor}\label{cor2}
		For $z_0\in  V$, we fix a normal coordinates $(w_1,\cdots,w_m)$ so that $V$ is given by $w_{n+1}=\cdots=w_m$ and $\omega(z_0)=\sqrt{-1}\sum_{i=1}^{m}dw_i\wedge d\bar{w}_i$. Then for $z\in \C^m$ have the scaling limit
	$$\Zb_{\infty}(z|V)=\lim_{k\to \infty}\Zb_k(z_0+\frac{z}{\sqrt{k}}|V)=\sqrt{-1}\sum_{i=1}^{n}dz_i\wedge d\bar{z}_i+\sqrt{-1}\ddbar\log (e^{\sum_{i=n+1}^{m}|z_i|^2}-1)$$
\end{cor}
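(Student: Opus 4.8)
The plan is to combine the Proposition with Theorem~\ref{main2} and pass to the rescaling limit, working throughout in the coordinates $(w_1,\dots,w_m)$ of the statement — so $V=\{w_{n+1}=\cdots=w_m=0\}$ and $\omega(z_0)=\sqrt{-1}\sum dw_i\wedge d\bar w_i$ — and with the dilation $\iota_k\colon\zeta\mapsto z_0+\zeta/\sqrt k$. By the Proposition, locally $\Zb_k(\cdot|V)=\frac{i}{2\pi}\ddbar\,u_k$ with $u_k=\log\rho_{k,V}+\psi_k$, where $\psi_k$ is a local potential for $k\omega$; and since $\Zb_k(\cdot|V)$ is an average of effective divisors it is a positive $(1,1)$-current, so $u_k$ is plurisubharmonic. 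I will show that $\iota_k^*u_k$, after subtracting a $k$-dependent constant, converges in $L^1_{\mathrm{loc}}(\C^m)$ to $v_\infty(\zeta):=\sum_{i=1}^m|\zeta_i|^2+\log\bigl(1-e^{-s(\zeta)}\bigr)$, where $s(\zeta):=\sum_{i=n+1}^m|\zeta_i|^2$; because $\ddbar$ is continuous along $L^1_{\mathrm{loc}}$-convergent sequences of plurisubharmonic functions, this forces $\iota_k^*\Zb_k(\cdot|V)\to\frac{i}{2\pi}\ddbar v_\infty$ weakly, and rewriting $\log(1-e^{-s})=\log(e^{s}-1)-s$ turns this into $\sqrt{-1}\sum_{i=1}^n d\zeta_i\wedge d\bar\zeta_i+\sqrt{-1}\ddbar\log(e^{s(\zeta)}-1)$, which is the asserted $\Zb_\infty(z|V)$.

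The convergence of $\iota_k^*u_k$ splits into three pieces via $u_k=\log\rho_k+\log(\rho_{k,V}/\rho_k)+\psi_k$. First, the Tian--Zelditch--Lu expansion holds in $C^\infty$, so $\iota_k^*\log\rho_k=m\log(k/\pi)+O(k^{-1})$ uniformly on compact sets, and the constant $m\log(k/\pi)$ is the one we subtract. Second, since the coordinates are normal at $z_0$ we may take $\psi_k=k\bigl(\sum_i|w_i|^2+O(|w|^3)\bigr)$, so $\iota_k^*\psi_k=\sum_{i=1}^m|\zeta_i|^2+O(|\zeta|^3/\sqrt k)\to\sum_{i=1}^m|\zeta_i|^2$ in $C^\infty_{\mathrm{loc}}$. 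Third, by the first bullet of Theorem~\ref{main2}, near $V$ we have $\log(\rho_{k,V}/\rho_k)=\log(1-e^{-kr^2})+\log(1+R_k)$ with $r=d(z,V)$; for $\zeta$ in a fixed compact set the hypothesis $r\le\sqrt{\log k}/\sqrt{2k}$ holds once $k$ is large, and the error estimate of Theorem~\ref{main2} gives $R_k(z_0+\zeta/\sqrt k)\to0$ locally uniformly. Since the squared distance to $V$ is smooth near $V$ with $d(w,V)^2=s(w)+O(|w|^3)$, one has $\iota_k^*(kr^2)=s(\zeta)+O(|\zeta|^3/\sqrt k)$, whence $\iota_k^*\log(\rho_{k,V}/\rho_k)\to\log(1-e^{-s(\zeta)})$ pointwise and locally uniformly off $\{\zeta'=0\}$; along $\{\zeta'=0\}$ both sides have only a $\log|\zeta'|^2$-type singularity, which is locally integrable and uniformly dominated, so the convergence holds in $L^1_{\mathrm{loc}}(\C^m)$ as well. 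Adding the three pieces gives $\iota_k^*u_k-m\log(k/\pi)\to v_\infty$ in $L^1_{\mathrm{loc}}$, as required.

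The one genuinely delicate point is the interchange of $\ddbar$ with the limit $k\to\infty$ at the locus $\{\zeta'=0\}$, where $\log(1-e^{-kr^2})$ and its limit both blow up: rather than try to differentiate the asymptotics of $\rho_{k,V}$ twice near $V$ — where the remainder $R_k$ carries no evident $C^2$ bound — I pass to the limit at the level of the plurisubharmonic potentials $u_k$, for which $L^1_{\mathrm{loc}}$ convergence is all that is needed and follows from Theorem~\ref{main2} together with elementary normal-coordinate expansions of $d(\cdot,V)^2$ and of the \kahler potential. The remaining bookkeeping — the orders in those Taylor expansions, and tracking the overall normalization constant fixed by $\omega(z_0)=\sqrt{-1}\sum dw_i\wedge d\bar w_i$ — is routine.
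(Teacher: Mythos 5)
Your proposal is correct and takes essentially the same route as the paper's proof: rescale the local potential $\log(\rho_{k,V}e^{k\phi})$ at $z_0$, use the near-$V$ regime of Theorem \ref{main2} together with $k\,d(z_0+z/\sqrt{k},V)^2\to\sum_{i=n+1}^{m}|z_i|^2$ to identify the limit $\frac{k^m}{\pi^m}\bigl(e^{|z|^2}-e^{\sum_{i=1}^{n}|z_i|^2}\bigr)(1+o(1))$, and then apply $\sqrt{-1}\ddbar\log$. The only differences are matters of justification rather than of method: you legitimize the interchange of $\ddbar$ with the limit via $L^1_{\mathrm{loc}}$ convergence of the plurisubharmonic potentials (a point the paper's proof leaves implicit) and use the Taylor expansion of $d(\cdot,V)^2$ where the paper invokes Toponogov's comparison theorem.
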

	
	\
	The structure of this article is as follows. We will first do the calculations in the complex projective space, which gives us important insight for the general picture. Then we use the Ohsawa-Takegoshi-Manivel extension theorem to prove lemma \ref{theoR}, which is very important for this article, and theorem \ref{main1}. Then we use the asymptotic of the off-diagonal Bergman kernel to prove theorems \ref{main2Part1} and \ref{main2Part2} which form theorem \ref{main2}. Then we quickly prove corollaries \ref{cor1} and \ref{cor2}.
	
	\
	
	\textbf{Acknowledgements.} The author would like to thank Professor Bernard Shiffman for his continuous and unconditional support. The author would also like to thank Professor Chengjie Yu and Professor Song Sun for many very helpful discussions.

	\section{Complex projective space and its geometry}
	Let $[Z_0,\cdots,Z_N]$ be the homogeneous coordinates of $\CP^N$. $U_0=\{[1,z],z\in \C^N\}$ is a coordinate patch with $z_i=\frac{Z_i}{Z_0}$. The $Z_i$'s can be identified as generating sections in 	$H^0(\CP^N,\ocal(1))$. In particular, $Z_0$ is a local frame in $U_0$. Then on $U_0$, the Fubini-Study form $\omega=\frac{i}{2}\ddbar\log (1+|z|^2)$ has the following explicit form $$\omega=\frac{i}{2}\frac{(1+|z|^2)\sum dz^i\wedge d\bar{z}_i-(\sum \bar{z}_idz_i)(\sum z_id\bar{z}_i)}{(1+|z|^2)^2}$$
	and the point-wise norm of $Z_0$ is given by
	$$|Z_0|^2_{FS}(z)=\frac{1}{1+|z|^2}=e^{-\varphi}$$
	For simplicity, we use the volume form $\frac{\omega^N}{\pi^N}$ instead of $\frac{\omega^N}{N!}$. Then the total volume of $M=\CP^N$ is $1$. With the Riemannian metric associated to $\omega$, the distance between two points $[Z]$ and $[W]$ in $M$ is given by $\arccos \frac{|<Z,W>|}{|Z||W|}$.
	
	\
	
Sections in	$H^0(\CP^N,\ocal(k))$ are represented by homogeneous polynomials of variables $Z_0,$$\cdots,Z_N$. Endowed with the inner product
$$<s_1,s_2>=\int_{\CP^N}s_1\bar{s}_2e^{-k\phi}\omega^N$$
 $H^0(\CP^N,\ocal(k))$ is then a Bergman space, denoted by $\hcal_k$. And the Bergnan kernel $\rho_k(z)$, by $U(N+1)$-invariance, is constant. So we have
 $\rho_k(z)=\dim H^0(\CP^N,\ocal(k))=N_k$. Therefore, we can read out an orthonormal basis of $\hcal_k$ from the binomial expansion of $\rho_k(z)e^{k\varphi}=N_k(1+|z|^2)^k$.

 \

Let $V\subset \CP^N$ be a linear subspace of codimension $m$. But a $U(N+1)$-change of coordinates, we can assume that in the coordinate patch $U_0=\{[1,z],z\in \C^N\}$, $V$ is defined by $z_1=\cdots=z_m=0$. Then the sections of $H^0(\CP^N,\ocal(1))$ that vanish along $V$ are generated by $Z_i$, $i=1,\cdots,m$, where $Z_i$ is represented by $z_i$ in $U_0$. More generally, we consider the space $\hcal_{k,V}$, consisting of sections of $H^0(\CP^N,\ocal(k))$ that vanishes along $V$. By a quick calculation, the Bergman kernel of $\hcal_{k,V}$ on $U_0$ is
$$\rho_{k,V}=N_k\frac{(1+|z|^2)^k-(1+\sum_{i=m+1}^{N}|z_i|^2)^k}{(1+|z|^2)^k}$$
In particular $$\rho_{1,V}=(N+1)\frac{\sum_{i=1}^{m}|z_i|^2}{1+|z|^2}$$
We will need the following lemma:
\begin{lem}\label{projectivedistance}
	For each point $[Z]\in M$, the number $\arcsin\sqrt{\frac{\rho_{1,V}([Z])}{N+1}}$ is just the distance of $[Z]$ to $V$ under the Fubini-Study distance.
\end{lem}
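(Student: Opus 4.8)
The plan is to evaluate both sides explicitly in homogeneous coordinates and match them through an elementary trigonometric identity. First note that the function $\rho_{1,V}$, written on $U_0$ as $(N+1)\sum_{i=1}^{m}|z_i|^2/(1+|z|^2)$, is the restriction of the globally defined function
$$[Z]\longmapsto (N+1)\,\frac{\sum_{i=1}^{m}|Z_i|^2}{\sum_{j=0}^{N}|Z_j|^2}$$
on $\CP^N$, so it is enough to verify the claimed identity for $[Z]\in U_0$ and conclude the general case by continuity (or simply repeat the computation with $|Z|^2$ in place of $1+|z|^2$).

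Next I would unwind the definition of the Fubini--Study distance. The subspace $V=\{Z_1=\cdots=Z_m=0\}$ corresponds to the linear subspace $E\subset\C^{N+1}$ spanned by the standard basis vectors $e_0,e_{m+1},\dots,e_N$. Since $d([Z],[W])=\arccos\frac{|\lla Z,W\rra|}{|Z|\,|W|}$, we get
$$d([Z],V)=\arccos\Bigl(\max_{0\neq W\in E}\frac{|\lla Z,W\rra|}{|Z|\,|W|}\Bigr).$$
For $W\in E$ one has $\lla Z,W\rra=\lla P_E Z,W\rra$, where $P_E$ is the orthogonal (Hermitian) projection onto $E$, so by Cauchy--Schwarz the maximum equals $|P_E Z|/|Z|$, attained at $W=P_E Z$. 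Writing $Z=(1,z_1,\dots,z_N)$ gives $|Z|^2=1+|z|^2$ and $|P_E Z|^2=1+\sum_{i=m+1}^{N}|z_i|^2$, hence
$$\cos^2 d([Z],V)=\frac{1+\sum_{i=m+1}^{N}|z_i|^2}{1+|z|^2}.$$

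Finally, using $\sin^2=1-\cos^2$,
$$\sin^2 d([Z],V)=\frac{(1+|z|^2)-\bigl(1+\sum_{i=m+1}^{N}|z_i|^2\bigr)}{1+|z|^2}=\frac{\sum_{i=1}^{m}|z_i|^2}{1+|z|^2}=\frac{\rho_{1,V}([Z])}{N+1},$$
and since $d([Z],V)\in[0,\tfrac{\pi}{2}]$ we may apply $\arcsin$ to both sides to obtain the lemma. I do not expect a genuine obstacle here; the only point that deserves a line of justification is that minimizing the projective distance over $W\in V$ reduces to the linear optimization over the linear subspace $E$ --- equivalently, that the closest point of $V$ to $[Z]$ is $[P_E Z]$ --- which is exactly the equality case of the Cauchy--Schwarz estimate used above.
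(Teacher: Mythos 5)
Your proof is correct and follows essentially the same route as the paper: both reduce the claim to the explicit Fubini--Study distance formula $\arccos\frac{|\lla Z,W\rra|}{|Z||W|}$ and an elementary $\sin^2=1-\cos^2$ computation, the paper merely compressing this to the observation that the distance from $[1,z]$ to $[1,0]$ is $\arccos\frac{1}{\sqrt{1+|z|^2}}$. Your Cauchy--Schwarz/orthogonal-projection step just makes explicit the identification of the nearest point of $V$, which the paper leaves implicit.
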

\begin{proof}
	This is straightforward, since the distance of $[1,z]\in U_0$ to $[1,0]$ is just $\arccos\frac{1}{1+|z|^2}$.
\end{proof}
	We clearly have 
	\begin{eqnarray*}
		\lim_{k\to \infty}\sqrt{-1}\ddbar\log [(1+|z|^2/k)^k&-&(1+\sum_{i=m+1}^{N}|z_i|^2/k)^k]\\
		&=&\sqrt{-1}\ddbar\log (e^{|z|^2}-e^{\sum_{i=m+1}^{N}|z_i|^2})\\
			&=&\sqrt{-1}\sum_{i=m+1}^{N}dz_i\wedge d\bar{z}_i+\sqrt{-1}\ddbar\log(e^{\sum_{i=1}^{m}|z_i|^2}-1)
	\end{eqnarray*}
which gives us a hint on why corollary \ref{cor2} should be true.

	\section{Setting-up for the general case}
	Recall that the on-diagonal Bergman kernel can also be defined as
	$$\rho_k(z)=\sup_{\parallel s\parallel=1} |s(z)|^2_h$$
	And at each point $p\in M$, the supremum is achieved by an unique(up to a multiple of $e^{i\theta}$) unit section, denoted by $s_p$, called the peak section at $p$.  $s_p$ can also be characterized as the unit section that is orthogonal to all holomorphic sections that vanishes at $p$.
	 The techniques of peak sections have been very useful in the calculation of asymptotics of Bergman kernel, for example \cite{Lu2000On}. For reader's convenience and for later use, let us quickly copy some details of the construction of the peak sections.
	
	\
	
	First of all, we can choose local holomorphic coordinates $\{w_a\}$ centered at a given point $p$ and local \kahler potential $\phi$ for the \kahler form $\omega$
	so that $$\phi(w)=\sum_aw_a\bar{w}_a+O(w^3)$$
	Then by a careful change of coordinates we can make $\phi$ be of the form
	$$\phi(w)=\sum_a w_a\bar{w}_a+\sum P_{abcd}w_aw_b\bar{w}_c\bar{w}_d+\textit{higher order terms}$$
	Then by the rescaling of coordinates $z_a=\sqrt{k}w_a$, $\phi$ becomes
	$$\Phi(z)=|z|^2+\frac{1}{k}P(z)+k^{-3/2}Q(z)+O(k^{-2})$$
	And the volume form $(i\ddbar \Phi)^m$ is of the form
	$$J=1+k^{-1}p(z)+k^{-3/2}q(z)+O(k^{-2})$$
	Then one choose local frame $\sigma_0$ for the line bundle $L^k$ on the ball $|z|\leq k^{1/4}$ for example so that $|\sigma_0|^2_h=e^{-\Phi}$. We will mention $\sigma_0$ as a "normal frame".
	 One then modify $\sigma_0$ to get the peak section. More precisely, $\sigma_0$, when regarded as a global discontinuous section of $L^k$, extending by zero outside our ball, is almost orthogonal to all holomorphic sections vanishing at $p$. In fact, by the symmetry of $e^{-|z|^2}$, we have $$|<\sigma_0,\tau>|\leq Ck^{-1}\parallel \tau\parallel$$
	and $$\parallel\sigma_0\parallel^2=\pi^m(1+ak^{-1}+O(k^{-2}))$$
	Then by H\"{o}rmander's $L^2$-techniques, we can modify $\sigma_0$ to get a global section, by introducing an error of the size $\epsilon(k)$, due to the fact that $e^{-|z|^2}$ decays very fast near the boundary of our ball.  Therefore, under the local frame $\sigma_0$, the peak section $s_p$ is represented by a holomorphic function of the form $\sqrt{\frac{k^m}{\pi^m}}(1+O(k^{-1}))$. We will be using the important property of $s_p$ that 
	$$|s_p(z)|_h=\epsilon(k),$$
	for $z$ whose distance to $p$ is $\geq \frac{\log k}{\sqrt{k}}$, since $e^{(\log k)^2}=\epsilon(k)$.
	
	\
	
Let $V$ be a subvariety of $M$, then for $k$ large enough, we have the exact sequence
$$0\to H^0(M,L^k\otimes I_V)\to H^0(M,L^k)\to H^0(V,L^k)\to 0$$
where $I_V$ is the ideal sheaf of $V$. So $\hcal_{k,V}=H^0(M,L^k\otimes I_V)$, and
the orthogonal complement $\hcal_{k,V}^{\perp}$ is isomorphic to $H^0(V,L^k)$.
We have the following lemma.
\begin{lem}\label{span}
	$\hcal_{k,V}^{\perp}$ is spanned by the peak sections $s_p$ for $p\in V$.
\end{lem}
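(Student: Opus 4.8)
The plan is to establish the two inclusions separately. For $\operatorname{span}\{s_p : p\in V\}\subseteq\hcal_{k,V}^\perp$, I would invoke the characterization of the peak section recalled above: $s_p$ is the unit section of $L^k$ orthogonal to every holomorphic section vanishing at $p$. If $p\in V$ then every $s\in\hcal_{k,V}=H^0(M,L^k\otimes I_V)$ vanishes at $p$, so $\langle s_p,s\rangle=0$ for all such $s$; that is, $s_p\in\hcal_{k,V}^\perp$, and hence $\operatorname{span}\{s_p:p\in V\}\subseteq\hcal_{k,V}^\perp$.

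For the reverse inclusion it suffices, by finite-dimensionality and non-degeneracy of the inner product, to show that any $t\in\hcal_{k,V}^\perp$ with $\langle t,s_p\rangle=0$ for all $p\in V$ is zero. The key observation — valid for $k$ large, as assumed throughout — is that the evaluation $H^0(M,L^k)\to L^k_p$ is onto, so $\{s:s(p)=0\}$ has codimension one and its orthogonal complement is exactly the line $\C s_p$; therefore, for any $t\in H^0(M,L^k)$, $\langle t,s_p\rangle=0$ if and only if $t(p)=0$. Applying this for each $p\in V$ shows that such a $t$ vanishes at every point of $V$, hence $t\in H^0(M,L^k\otimes I_V)=\hcal_{k,V}$; since also $t\in\hcal_{k,V}^\perp$, we get $t\in\hcal_{k,V}\cap\hcal_{k,V}^\perp=\{0\}$. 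This gives $\hcal_{k,V}^\perp=\operatorname{span}\{s_p:p\in V\}$.

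The argument is elementary; the one place that deserves care is the implication ``$t$ vanishes at every point of $V$ $\Longrightarrow$ $t\in\hcal_{k,V}$'', which relies on $I_V$ being the ideal sheaf of the reduced subvariety $V$ and on the exactness of the restriction sequence $0\to H^0(M,L^k\otimes I_V)\to H^0(M,L^k)\to H^0(V,L^k)\to 0$ for $k$ large — both already recorded in the set-up above. If one wanted the analogous statement for a non-reduced subscheme $V$, the peak sections $s_p$ would have to be replaced by higher-order peak sections adapted to the infinitesimal neighbourhoods of $V$; this is not needed here, where $V$ is ultimately taken to be a smooth subvariety.
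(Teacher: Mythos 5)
Your proof is correct and follows essentially the same route as the paper: the first inclusion uses that $s_p$ is orthogonal to every section vanishing at $p$, and the reverse direction uses that (for $k$ large, so evaluation at $p$ is surjective) orthogonality to $s_p$ forces vanishing at $p$, hence membership in $\hcal_{k,V}$. Your remark about the reduced ideal sheaf and the exact restriction sequence is a reasonable clarification of a point the paper leaves implicit, but the argument itself matches the paper's.
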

\begin{proof}
	We denote by $W$ the linear space spanned by the peak sections $\{s_p\}_{p\in V}$. Then for any $s\in \hcal_{k,V}$, and each $p\in V$, $s$, vanishing at $p$, is orthogonal to $s_p$. Therefore $s_p\in \hcal_{k,V}^{\perp}$, namely $W\subset \hcal_{k,V}^{\perp}$. On the other hand, if $s\in \hcal_k$ is orthogonal to $W$, then $s$ has to vanish at each $p\in V$, so $s\in \hcal_{k,V}$.
\end{proof}
We denote by $\pi$ the restriction map $H^0(M,L^k)\to H^0(V,L^k)$. The restriction of $\pi$ on $\hcal_{k,V}^{\perp}$ is an isomorphism, and is denoted by $R$.

\

 When $V$ is of pure dimension $n$, $H^0(V,L^k)$ is also endowed with a Hermitian inner product by integrating over the smooth part of $V$. We want to show that $\frac{R}{\sqrt{k^r}}$ is a quasi-isometry. For this purpose, we need to use the Ohsawa-Takegoshi-Manivel extension theorem. There are several versions of this theorem, for example, \cite{OhsawaDemailly, Manivel1993,Varolin2015}. The version that is most useful for our purpose is the one from \cite{Ohsawa5}. In order to state the theorem, we copy the setting-up from \cite{Ohsawa5}.

 \

 Let $M$ be a complex manifold of dimension $m$ with continuous  measure $d\mu_M$ and let $(E,h) $ be a holomorphic Hermitian vector bundle over $M$. Let $S$ be a closed complex submanifold of dimension $n$.
Consider a class of continuous functions $\Psi:M\to [-\infty,0)$ such that
\begin{itemize}
	\item[(1)] $S\subset\Psi^{-1}(-\infty)$
	\item[(2)] If $S$ is $n$-dimensional around a point $x$, there exists a local coordinate $(z_1,\cdots,z_m)$ on a neighborhood $U$ of $x$ such that $z_{n+1}=\cdots=z_m=0$ on $S\cap U$ and
	$$\sup_{U\backslash S}|\Psi(z)-(m-n)\log \sum_{n+1}^{m}|z_j|^2|<\infty$$
\end{itemize}
The set of such functions $\Psi$ will be denoted by $\sharp(S)$. Clearly, the condition (2) does not depend on the choice of local coordinate. For each $\Psi\in \sharp(S)$, one can associate a positive measure $dV_M[\Psi]$ on $S$ as the minimum element of the partially ordered set of positive measure $d\mu$ satisfying
$$\int_{S}fd\mu\geq \overline{\lim_{t\to \infty}}\frac{2(m-n)}{\sigma_{2m-2n-1}}\int_Mfe^{-\Psi} \chi_{R(\Psi,t)}dV_M$$
for any nonnegative continuous function $f$ with supp$f\subset\subset M$
. Here $\sigma_m$ denotes the volume of the unit sphere in $\R^{m+1}$, and $\chi_{R(\Psi,t)}$ denotes the characteristic function of the set
$$R(\Psi,t)=\{x\in M|-t-1<\Psi(x)<-t \}.$$
Let $\Theta_h$ be the curvature form of the fiber metric $h$. Let $\Delta_h(S)$ be the set of functions $\tilde{\Psi}$ in $\sharp(S)$ such that, for any point $x\in M$, $e^{-\tilde{\Psi}}h=e^{-\hat{\Psi}}\hat{h}$ around $x$ for some plurisubharmonic function $\hat{\Psi}$ and some fiber metric $\hat{h}$ whose curvature form is semipositive in the sense of Nakano.
\begin{theo}[Ohsawa-Takegoshi-Manivel,\cite{Ohsawa5}]
Let $M$ be a complex manifold with a continuous volume form $dV_M$, let $E$ be a holomorphic vector bundle over $M$ with a $C^{\infty}$ fiber metric $h$, let $S$ be a closed complex submanifold of $M$, let $\Psi\in \sharp(S)$ and let $K_M$ be the canonical line bundle of $M$. If the following are satisfied,
\begin{itemize}
	\item[1)] There exists a closed subset $X\subset M$ such that \begin{itemize}
		\item[(a)] $X$ is locally negligible with respect to $L^2$ holomorphic functions, i.e., for any local coordinate neighborhood $U\subset M$ and for any $L^2$ holomorphic function $f$ on $U\backslash X$, there exists a holomorphic function $\tilde{f}$ on $U$ such that $\tilde{f}|_{U\backslash X}=f$.
		\item[(b)] $M\backslash X$ is a Stein manifold with intersects with every component of $S$.
	\end{itemize}
\item[2)] $\Theta_h\geq 0$ in the sense of Nakano.
\item[3)]$(1+\delta)\Psi\in \Delta_h(S)\cap C^{\infty}(M\backslash S)$ for some $\delta>0$.

\end{itemize}
then there exists a constant $C$ such that for any $f\in H^0(S,E\otimes K_M|_S)$ such that
$$\int_S |f|^2_{h\otimes (dV_M)^{-1}}dV_M[\Psi]<\infty,$$
there exists $F\in H^0(M,E\otimes K_M)$ such that
$$\int_M|F|^2_{h\otimes (dV_M)^{-1}}dV_M\leq (C+\delta^{-3/2})^2\int_S |f|^2_{h\otimes (dV_M)^{-1}}dV_M[\Psi].$$
If $\Psi$ is plurisubharmonic, the constant $(C+\delta^{-3/2})^2$ can be chosen to be less than $256\pi $.

\end{theo}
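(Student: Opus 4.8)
The theorem is the Ohsawa--Takegoshi--Manivel extension theorem, and the plan is to follow the H\"ormander $L^2$ strategy: I would realize the desired holomorphic extension as a correction $F=\tilde f-u$ of a merely smooth extension $\tilde f$ of $f$, where $u$ solves a $\dbar$-equation carrying an $L^2$-estimate strong enough to force $u|_S=0$. First I would build $\tilde f$. Using condition (2) in the definition of $\sharp(S)$, near each point of $S$ pick coordinates $(z_1,\dots,z_m)$ with $S=\{z_{n+1}=\cdots=z_m=0\}$, extend $f$ so that it is independent of the normal variables $z_{n+1},\dots,z_m$, and patch these local extensions by a cutoff $\chi$ supported in a tubular neighborhood of $S$. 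This produces a global smooth section $\tilde f$ of $E\otimes K_M$ with $\tilde f|_S=f$, and since we only multiply by a cutoff, $v:=\dbar\tilde f$ is a $\dbar$-closed $(m,1)$-form with values in $E$ whose support sits in the region where $\chi$ varies, concentrated near $S$.

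Next I would solve $\dbar u=v$ against a weight $e^{-\phi}$ that degenerates along $S$. The weight combines two ingredients supplied by the hypotheses: the plurisubharmonic part $\hat\Psi$ and the Nakano-semipositive metric $\hat h$ from the decomposition $e^{-(1+\de)\Psi}h=e^{-\hat\Psi}\hat h$ guaranteed by condition (3), which furnishes the curvature positivity, together with the singular factor $e^{-(1+\de)\Psi}$ itself. Because condition (2) gives $(1+\de)\Psi\sim (m-n)\log\sum_{j>n}|z_j|^2$ near $S$, any solution with $\int_M|u|^2e^{-(1+\de)\Psi}<\infty$ is forced to vanish on $S$; hence $F=\tilde f-u$ will automatically restrict to $f$, and it will be holomorphic since $\dbar F=v-v=0$.

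The estimate itself is the analytic heart and rests on the twisted Bochner--Kodaira--Nakano inequality. Introducing a positive twist function $\eta$ and an auxiliary function $\mu$ to absorb the gradient terms in $\partial\eta$, for $E$-valued $(m,1)$-forms $\alpha$ one has the a priori bound
$$\int_M(\eta+\mu)\,|\dbar^*_\phi\alpha|^2+\int_M\eta\,|\dbar\alpha|^2\ \ge\ \int_M\big\langle[\,\eta\,i\Theta_\phi-i\pbp\eta-i\mu^{-1}\partial\eta\wedge\dbar\eta\,]\Lambda\,\alpha,\alpha\big\rangle.$$
I would choose $\eta$ as a convex function of $\Psi$ with a logarithmic singularity (the Ohsawa--Takegoshi choice) and $\mu$ matched to it, so that condition (2) Nakano-semipositivity together with the convexity of $\eta$ makes the bracket nonnegative. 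Pairing $v$ against the minimal-norm solution $u=\dbar^*_\phi\beta$ then yields
$$\int_M|u|^2e^{-\phi}\ \le\ (C+\de^{-3/2})^2\int_S|f|^2_{h\otimes(dV_M)^{-1}}\,dV_M[\Psi].$$
The measure $dV_M[\Psi]$ appears exactly here: the right-hand side is estimated by concentrating $e^{-\Psi}|v|^2$ on the shells $R(\Psi,t)$ and taking the upper limit as $t\to\infty$, which is precisely how $dV_M[\Psi]$ is defined, and the factor $\de^{-3/2}$ records the optimization of the twist as $\Psi$ degenerates along $S$.

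Finally I would discharge the global hypotheses by exhaustion. Condition (1b) lets me run the construction first on relatively compact Stein pieces of $M\setminus X$, where H\"ormander's theorem is available in its standard form; condition (1a) then extends the resulting holomorphic solutions across the negligible set $X$; and the uniform constant $(C+\de^{-3/2})^2$ lets me pass to a convergent subsequence over an exhaustion to obtain the global $F$. The main obstacle is the analytic core: selecting the twist pair $(\eta,\mu)$ so that the curvature bracket stays nonnegative, the weight still forces vanishing on $S$, and the constant comes out as claimed, all simultaneously. In the plurisubharmonic case a sharper, convexity-optimal choice of $\eta$ --- the point where one optimizes the geometry of the level sets of $\Psi$ --- is what upgrades the bound to the explicit $256\pi$.
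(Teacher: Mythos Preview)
The paper does not prove this theorem at all: it is quoted verbatim from \cite{Ohsawa5} as a black-box tool, and immediately after the statement the author writes ``In our situation, $V$ is the $S$ in the theorem\ldots'' and proceeds to apply it. So there is no proof in the paper against which to compare your proposal.

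That said, your sketch is a fair outline of the standard approach to Ohsawa--Takegoshi type theorems (smooth extension, twisted Bochner--Kodaira--Nakano inequality with a carefully chosen pair $(\eta,\mu)$, solving $\dbar$ with a singular weight forcing vanishing on $S$, then exhaustion and extension across $X$). For the purposes of this paper, however, no proof is expected here; the result is simply cited and used to derive Theorem~\ref{theoR}.
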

In our situation, $V$ is the $S$ in the theorem. The volume form is $dV_M=\frac{\omega^m}{m!}$. The vector bundle $E$ is the line bundle $L^k-K_M$ with a twisting of the metric $e^{-k\phi}\otimes dV_M=e^{-k\phi+\kappa}$. Since our manifold $M$ is projective, the $X$ in the theorem exists. Let $N_V$ denote the normal bundle of $V$, with the metric induced by $\omega$ on $T_M$. Let $r$ denote the length of vectors in $N_V$. Denote by $N_V(\rho)$ the subset of vectors with length $r<\rho$, the for $\rho$ small enough, the exponential map $$\exp: N_V(\rho)\to M$$
is a diffeomorphism of $N_V(\rho)$ with its image. Then $r$ is a function in a neighborhood of $V$ in $M$, we then choose a nonnegative smooth function $\chi$ on $[0,\infty)$, which is concave and satisfies the following conditions
\begin{itemize}
\item[(1)]$\chi(x)=x$ for $x\leq \frac{(\log k)^2}{k}$;
\item[(2)]$\chi(x)$ is constant for $x\geq \frac{(10\log k)^2}{k}$
\end{itemize}
So $\chi(r^2)$ can be seen as a smooth function on $M$, which is constant away from $V$. Then we twist the metric on $L^k-K_M$ by $e^{\beta_k \chi(r^2)}$, for $\beta_k$ to be determined.
Then we let $\Psi=(m-n)\log r^2$ and extend it smoothly to be defined on $M$. Clearly this function $\Psi$ satisfies the two conditions in the definition of $\sharp(V)$. Also $(1+1)\Psi\in \Delta_h(V)\cap C^{\infty}(M\backslash S)$ for $h=e^{-k\phi+\kappa+\beta_k \chi(r^2)}$ when $\beta_k$ not too big. We want to make $\beta_k$ as large as possible. So we calculate
$$\ddbar \chi(r^2)=\chi'\ddbar r^2+\chi^{''}\partial r^2\wedge\bar{\partial}r^2$$
By our construction of $\chi$, we have $0\leq \chi'\leq 1$ and $\chi^{''}\leq 0$. So one can see that for $k$ large, we can allow $\beta_k$ to be of the size $k-O(\frac{\log k}{\sqrt{k}})$. Finally, we calculate the measure $dV_M[\Psi]$. By integrating along fibers, one sees easily that for our $\Psi$, the measure $dV_M[\Psi]$ is just the smooth measure $\frac{\omega^n}{n!}$. 

\

Now we can apply the Ohsawa-Takegoshi-Manivel extension theorem to get that for any $f\in H^0(V,L^k)$, one can find $F\in H^0(M,L^k)$ satisfying the inequality:
\begin{equation}
\int_M |F|^2e^{-k\phi+\beta_k\chi(r^2)}\frac{\omega^m}{m!}\leq C\int_V |f|^2e^{-k\phi}\frac{\omega^n}{n!}
\end{equation}
for some constant $C$ independent of $k$. 
 For simplicity, we assume that $\int_V |f|^2e^{-k\phi}\frac{\omega^n}{n!}=1$.

\

First of all, since $e^{-\beta_k\chi(r^2)}=\epsilon(k)$ for $r\geq \rho=\frac{\log k}{\sqrt{k}}$,
 we see that
$$\int_{M\backslash N_V(\rho)}|F|^2e^{-k\phi}dV_{M,\omega}=\epsilon(k)$$
So we can see that the mass of $F$ is concentrated within a small tubular neiborhood of $V$ with radius $\rho=\frac{\log k}{\sqrt{k}}$. To estimate the integral within the small neighborhood. We only need to notice that
$$\int_{\C}|z|^{2a}e^{-k|z|^2}dV=\frac{\pi a!}{k^{a+1}}$$
Since $\beta_k=k-O(\frac{\log k}{\sqrt{k}})$, by integrating along fibers of $N_V$ first, one can see that
$$\int_{N_V(\rho)}|F|^2e^{-k\phi}dV_{M,\omega}\approx \frac{1}{k^{m-n}} \int_{N_V(\rho)} |F|^2e^{-k\phi+\beta_k\chi(r^2)}\frac{\omega^m}{m!}$$
Therefore, we have the following
\begin{theo}\label{theoR}
The restriction map $R: \hcal_{k,V}^{\perp}\to H^0(V,L^k)$ has norm satisfying
$$\parallel R\parallel^2=O(\frac{1}{k^{m-n}})$$
\end{theo}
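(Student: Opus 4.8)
The plan is to combine the orthogonal splitting $H^0(M,L^k)=\hcal_{k,V}\oplus\hcal_{k,V}^{\perp}$ with the Ohsawa--Takegoshi--Manivel estimate prepared above. Since $R$ is an isomorphism, the statement amounts to the bound $\|s\|^2_{L^2(M)}\le C\,k^{-(m-n)}\,\|Rs\|^2_{L^2(V)}$ for every $s\in\hcal_{k,V}^{\perp}$. Fix such an $s$, set $f=Rs=s|_V$, and rescale so that $\int_V|f|^2e^{-k\phi}\frac{\omega^n}{n!}=1$. Feeding $f$ to the extension theorem in the form set up above produces $F\in H^0(M,L^k)$ with $F|_V=f$ and $\int_M|F|^2e^{-k\phi+\beta_k\chi(r^2)}\frac{\omega^m}{m!}\le C$, with $C$ independent of $k$. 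The first step is to pass from $F$ to $s$: since $(F-s)|_V=f-f=0$ we have $F-s\in\hcal_{k,V}$, and because $s\perp\hcal_{k,V}$ this says that $s$ is precisely the $L^2(M)$-orthogonal projection of $F$ onto $\hcal_{k,V}^{\perp}$; hence $\|s\|_{L^2(M)}\le\|F\|_{L^2(M)}$, and it suffices to estimate $\int_M|F|^2e^{-k\phi}\frac{\omega^m}{m!}$.

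Here I would use the two regional estimates already recorded. Outside $N_V(\rho)$, $\rho=\log k/\sqrt k$, one has $\chi(r^2)\ge\rho^2=(\log k)^2/k$, so $e^{-\beta_k\chi(r^2)}\le e^{-\beta_k\rho^2}=e^{-(1+o(1))(\log k)^2}=\epsilon(k)$, and that piece of the integral is $\le\epsilon(k)\int_M|F|^2e^{-k\phi+\beta_k\chi(r^2)}\frac{\omega^m}{m!}=\epsilon(k)$. Inside $N_V(\rho)$,
$$\int_{N_V(\rho)}|F|^2e^{-k\phi}\frac{\omega^m}{m!}\;\approx\;\frac{1}{k^{m-n}}\int_{N_V(\rho)}|F|^2e^{-k\phi+\beta_k\chi(r^2)}\frac{\omega^m}{m!}\;\le\;\frac{C}{k^{m-n}}.$$
Adding these, $\|s\|^2_{L^2(M)}\le\|F\|^2_{L^2(M)}\le C\,k^{-(m-n)}+\epsilon(k)$, and undoing the normalization gives $\|s\|^2_{L^2(M)}\le C\,k^{-(m-n)}\|Rs\|^2_{L^2(V)}$, as asserted.

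The orthogonal-projection step and the exterior bound are routine; the step that needs care is the interior estimate in the display. Using the exponential identification of $N_V(\rho)$ with a tube around $V$ and writing a point as $(p,u)$ with $p\in V$ and $u$ the normal variable, one has $\chi(r^2)=r^2$, while $\frac{\omega^m}{m!}$ factors up to $1+O(r)$ as the measure $\frac{\omega^n}{n!}$ on $V$ times Lebesgue measure on the fibre and $e^{-k\phi}$ factors up to $1+o(1)$ as its restriction to $V$ times $e^{-kr^2}$; since $\beta_k=k-O(\log k/\sqrt k)$ and $|u|<\rho$, on $N_V(\rho)$ the twisted weight coincides with the restriction of $e^{-k\phi}$ to $V$ up to $1+o(1)$, whereas the honest weight retains the Gaussian factor $e^{-kr^2}$, which integrates out to $\pi^{m-n}/k^{m-n}$ via $\int_{\C}|z|^{2a}e^{-k|z|^2}\,dV=\pi a!/k^{a+1}$ (the tail past $|u|=\rho$ being $\epsilon(k)$). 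The delicate ingredient is that this conversion be uniform in the base point, i.e. that $F$ is to leading order constant along the normal fibres of $N_V(\rho)$: writing $F(p,u)=\sum_\alpha c_\alpha(p)u^\alpha$ with $c_0(p)=f(p)$, the $\alpha=0$ term already accounts for the full $\frac{\pi^{m-n}}{k^{m-n}}\int_V|f|^2e^{-k\phi}\frac{\omega^n}{n!}$, and one must show the remaining coefficients negligible. This is what forces one to take $F$ of minimal twisted norm (hence Bergman-minimal, and so nearly constant, along each normal fibre for a weight that is essentially flat there), or equivalently to build $F$ by hand — extend $f$ to a section holomorphic to high order along $V$, multiply by a cutoff at scale $\rho$, and correct by an $L^2$-small $\dbar$-solution vanishing on $V$. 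I expect this fibrewise analysis to be the main obstacle.
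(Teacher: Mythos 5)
Your reading of the statement is the charitable and, in context, the correct one: as literally written, $\parallel R\parallel^2=O(k^{-(m-n)})$ cannot be meant (the peak section at a point of $V$ already gives $\parallel Rs\parallel^2\sim k^{m-n}\parallel s\parallel^2$); what the paper actually needs, and uses in the proof of Theorem \ref{main1}, is precisely the bound you set out to prove, $\parallel s\parallel^2_{L^2(M)}\le Ck^{-(m-n)}\parallel s|_V\parallel^2_{L^2(V)}$ for $s\in\hcal_{k,V}^{\perp}$, i.e.\ a bound on the minimal-extension operator $R^{-1}$. Your first two paragraphs follow the paper's own route: Ohsawa--Takegoshi--Manivel with the weight twisted by $e^{\beta_k\chi(r^2)}$, the exterior region killed because $e^{-\beta_k\rho^2}=\epsilon(k)$, and the orthogonal-projection step $\parallel s\parallel\le\parallel F\parallel$ (which the paper leaves implicit and you state correctly, since $F-s\in\hcal_{k,V}$ and $s\perp\hcal_{k,V}$).

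The interior step, however --- the display ``untwisted $\approx k^{-(m-n)}\times$ twisted on $N_V(\rho)$'' --- is a genuine gap, and you are right to flag it rather than assert it: it does not follow from the OTM output, and your proposal does not close it. On the tube of radius $\rho=\log k/\sqrt k$ the twisted weight is essentially flat along the normal fibres ($e^{-(k-\beta_k)|u|^2}=1+o(1)$), so the OTM bound only controls the unweighted fibre integrals $\int_{|u|<\rho}|F|^2\,dV(u)$; converting a flat fibre integral over a ball of radius $\rho$ into the Gaussian-weighted one gains only $(k\rho^2)^{-(m-n)}=(\log k)^{-2(m-n)}$, not $k^{-(m-n)}$. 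Concretely, in codimension one the twisted bound alone permits an extension of the shape $F=f(p)+c(p)u$ with $\int_V|c|^2e^{-k\phi}\sim\rho^{-4}$, whose untwisted $L^2(M)$-norm is of order $(\log k)^{-4}$, far larger than $k^{-1}$; so no argument that uses only ``$\int_M|F|^2e^{-k\phi+\beta_k\chi(r^2)}\le C$'' can yield the stated rate. One must either establish the fibrewise near-constancy of a carefully chosen (e.g.\ norm-minimal) extension, or bypass the twisted OTM estimate altogether and build the extension by hand, exactly as you suggest at the end: extend $f$ fibre-constantly on $N_V(\rho)$, cut off at $r\sim\log k/\sqrt k$, and correct by a H\"ormander $\dbar$-solution for a weight with the singular factor $e^{-(m-n)\log r^2}$ so that the correction vanishes on $V$, the $\dbar$-data being supported where $e^{-kr^2}=\epsilon(k)$; the fibre-constant piece has untwisted norm $\pi^{m-n}k^{-(m-n)}(1+o(1))\parallel f\parallel^2$, and minimality of $s=R^{-1}f$ finishes. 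Note that the step you isolate is exactly the one the paper itself passes over with ``by integrating along fibers $\ldots$ one can see that'', so you have located the weak point of the published argument; but as a proof of the theorem your text stops one essential step short.
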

\begin{rem}
	This theorem is very important for our subsequent arguments. If one wishes to be more precise than the big O, one need to have optimal constant in the Ohsawa-Takegoshi-Manivel extension theorem \cite{ZhouZhu1,ZhouZhu2}.
\end{rem}
As a direct application, we have theorem \ref{main1}:
\begin{theo}
For $z\in M$, let $r=d(z,V)$ be the distance. Then
when $r\geq \frac{\log k}{\sqrt{k}}$, we have	$$\rho_{k,V}(z)=\rho_k(z)-\epsilon(k).$$
In particular, $\rho_{k,V}(z)$ has the same asymptotic expansion as $\rho_k(z)$
\end{theo}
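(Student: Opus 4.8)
The plan is to control the orthogonal complement $\hcal_{k,V}^{\perp}$ and deduce the statement by subtraction. Since $\hcal_k=\hcal_{k,V}\oplus\hcal_{k,V}^{\perp}$ orthogonally and the on-diagonal Bergman kernel is independent of the choice of orthonormal basis, we have $\rho_k=\rho_{k,V}+\rho_{k,V}^{\perp}$, where $\rho_{k,V}^{\perp}$ is the on-diagonal Bergman kernel of the finite-dimensional space $\hcal_{k,V}^{\perp}$; equivalently $\rho_{k,V}^{\perp}(z)=\|K_{k,V}^{\perp}(\cdot,z)\|^2$ for its reproducing kernel $K_{k,V}^{\perp}$. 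So the whole statement reduces to showing
$$\rho_{k,V}^{\perp}(z)=\epsilon(k)\qquad\text{uniformly for } z \text{ with } d(z,V)\ge\frac{\log k}{\sqrt k}.$$

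The key point is that $K_{k,V}^{\perp}(\cdot,z)$ restricts to an extremely small section on $V$. From the splitting of reproducing kernels $K_k=K_{k,V}+K_{k,V}^{\perp}$ and the fact that $K_{k,V}(\cdot,z)\in\hcal_{k,V}$ vanishes identically along $V$, we get $K_{k,V}^{\perp}(w,z)=K_k(w,z)$ for every $w\in V$. Now $K_k(\cdot,z)=\sqrt{\rho_k(z)}\,s_z$ up to a unimodular constant, where $s_z$ is the peak section at $z$, so $|K_k(w,z)|_h=\sqrt{\rho_k(z)}\,|s_z(w)|_h$. For $w\in V$ we have $d(w,z)\ge d(z,V)\ge\frac{\log k}{\sqrt k}$, hence by the peak-section decay recalled in Section 3 (the Gaussian off-diagonal decay contributes a factor of size $e^{-(\log k)^2}$) we get $|s_z(w)|_h=\epsilon(k)$ uniformly in $w\in V$ and in $z$; together with $\rho_k(z)=O(k^m)$ this gives $|K_k(w,z)|_h=\epsilon(k)$ uniformly, and therefore
$$\|K_{k,V}^{\perp}(\cdot,z)|_V\|_V^2=\int_V |K_k(w,z)|_h^2\,\frac{\omega^n}{n!}\le\vol(V)\,\epsilon(k)=\epsilon(k).$$

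To finish I would transport this smallness back to $\hcal_{k,V}^{\perp}$ through the restriction isomorphism $R$ of Theorem \ref{theoR}. Since $K_{k,V}^{\perp}(\cdot,z)$ is the unique element of $\hcal_{k,V}^{\perp}$ whose restriction to $V$ equals $K_k(\cdot,z)|_V$, we have $K_{k,V}^{\perp}(\cdot,z)=R^{-1}\big(K_k(\cdot,z)|_V\big)$, so
$$\rho_{k,V}^{\perp}(z)=\|K_{k,V}^{\perp}(\cdot,z)\|^2\le\|R^{-1}\|^2\,\|K_k(\cdot,z)|_V\|_V^2\le\|R^{-1}\|^2\,\vol(V)\,\epsilon(k).$$
By Theorem \ref{theoR}, $R$ is an isomorphism whose inverse grows at most polynomially in $k$ (indeed $\|R^{-1}\|^2=O(k^{-(m-n)})=O(1)$), so the right-hand side is still $\epsilon(k)$. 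Hence $\rho_{k,V}(z)=\rho_k(z)-\rho_{k,V}^{\perp}(z)=\rho_k(z)-\epsilon(k)$ uniformly on $\{d(\cdot,V)\ge\frac{\log k}{\sqrt k}\}$, and in particular $\rho_{k,V}$ there has the same asymptotic expansion as $\rho_k$.

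I expect the only real subtlety to be the uniformity of the estimate $|s_z(w)|_h=\epsilon(k)$ — it must hold simultaneously for all $z$ with $d(z,V)\ge\frac{\log k}{\sqrt k}$ and all $w\in V$ — together with the fact that the separation here is only of order $\frac{\log k}{\sqrt k}$: one must use the Gaussian near-diagonal decay $|K_k(w,z)|_h\lesssim k^m e^{-c\,k\,d(w,z)^2}$ (exponential decay merely in $\sqrt k\,d(w,z)$ would only produce a power of $k$) patched with a crude global bound in the range where $d(w,z)$ is bounded below; the resulting composite estimate is precisely the peak-section property quoted in Section 3. The bound on $\|R^{-1}\|$ is the other ingredient, but since only a crude polynomial bound is needed it is already furnished by Theorem \ref{theoR}.
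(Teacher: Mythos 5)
Your proposal is correct and follows essentially the same route as the paper: the paper decomposes the peak section $s_z=s_1+s_2$ along $\hcal_{k,V}\oplus\hcal_{k,V}^{\perp}$, observes that $s_2|_V=s_z|_V$ has $L^2(V)$-norm $\epsilon(k)$ by the peak-section decay, and invokes Theorem \ref{theoR} in exactly the direction you use it (a polynomial bound on the inverse of the restriction map $R$, i.e.\ on extensions) to get $\parallel s_2\parallel^2=\epsilon(k)$, then concludes via the extremal characterization of $\rho_{k,V}$. Your reformulation through $\rho_k=\rho_{k,V}+\rho_{k,V}^{\perp}$ and reproducing kernels is the same decomposition in different clothing, since the orthogonal projection of $K_k(\cdot,z)$ onto $\hcal_{k,V}$ is precisely $K_{k,V}(\cdot,z)$.
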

\begin{proof}
Consider the peak section $s_z$. We know that for any point $w$ with distance $d(z,w)\geq \frac{\log k}{\sqrt{k}}$, the length
$|s_z(w)|_h=\epsilon(k)$. So, the $L^2$ norm of $s_z|_V$ is $\epsilon(k)$. So by the the theorem above, if we write $s_z=s_1+s_2$ with
$s_1\in \hcal_{k,V}$ and $s_2\in \hcal_{k,V}^{\perp}$, then $\parallel s_2\parallel^2=\epsilon(k)$. Therefore $\parallel s_1\parallel^2=1-\epsilon(k)$ and 
$$|s_1(z)|_h=|<s_1,s_z>||s_z(z)|_h=|<s_1,s_1>||s_z(z)|_h=(1-\epsilon(k))|s_z(z)|_h$$
So $\rho_{k,V}(z)\geq \rho_k(z)-\epsilon(k)$, on the other hand, we clearly have $\rho_{k,V}(z)< \rho_k(z)$, so the theorem is proved.
\end{proof}

\section{Calculations near $V$}

Next, we study the asymptotic of $\rho_{k,V}$ around $V$. For this,
we need the off-diagonal asymptotics for the Bergman kernel, about which we now recall some details from \cite{sz1,sz2,sz3}.

Let $\pi:X\to M$ be the circle bundle of unit vectors in the dual bundle $L^*\to M$ with respect to $h$. Sections of $L^k$ lift to equivariant functions on $X$. Then $s\in H^0(M,L^k)$ lifts to a CR holomorphic function on $X$ satisfying $\hat{s}(e^{i\theta }x)=e^{ik\theta}\hat{s}(x)$. We denote the space of such functions by $\hcal_k^2(X)$. The Szeg\"{o} projector  is the orthogonal projector
$$\Pi_k:L^2(X)\to \hcal_k^2(X),$$
which is given by the Szeg\"{o} kernel(Bergman kernel)
$$\Pi_k(x,y)=\sum \hat{s}_j(x)\overline{\hat{s}_j(y)} \quad (x,y\in X). $$
(Here the functions $\hat{s}_j$ are the lifts to $\hcal_k^2(X)$ of the orthonormal sections $\{s_j\}$; they provide an orthonormal basis for $\hcal_k^2(X)$.)

The covariant derivative $\nabla s$ of a section $s$ lifts to the horizontal derivative $\nabla_h\hat{s}$ of its equivariant lift $\hat{s}$ to $X$; the horizontal derivative is of the form
$$\nabla_h\hat{s}=\sum_{j=1}^{m}(\frac{\partial \hat{s}}{\partial z_j}-A_j\frac{\partial \hat{s}}{\partial \theta}dz_j)$$
For $z=\pi(x),w=\pi(y)\in M$, we will write
$$|\Pi_k(z,w)|=|\Pi_k(x,y)|,$$
in particular, on the diagonal $\Pi_k(z,z)>0$ is the same as our previous notation $\rho_k(z)$.  For each point $z_0\in M$, we choose a neighborhood $U$, a local coordinate chart $\rho: (U,z_0)\to (\C^m,0)$, and a preferred local frame at $z_0$, which is a local frame $e_L$ such that
$$\parallel e_L(z)\parallel_h=1-\frac{1}{2}|\rho(z)|^2+\cdots$$
For $u=(u_1,\cdots,u_m)\in \rho(U)$, $\theta\in (-\pi, \pi)$, let
$$\tilde{\rho}(u_1,\cdots,u_m,\theta)=\frac{e^{i\theta}}{|e_L^*(\rho^{-1}(u))|_h}e_L^*(\rho^{-1}(u))\in X$$
so that $(u_1,\cdots,u_m,\theta)\in \C^m\times \R$ give local coordinates on $X$. We then write
$$\Pi_k^{z_0}(u,\theta;v,\varphi)=\Pi_k(\tilde{\rho}(u,\theta),\tilde{\rho}(v,\varphi))$$

\begin{theo}[\cite{sz1,sz2,sz3}]
	Let $(L,h)$ be a positive Hermitian holomorphic line bundle over a compact m-dimensional \kahler manifold $M$. $\omega=\frac{i}{2}\Theta_h$ is the \kahler form. Let $z_0\in M$, then
	\begin{itemize}
		\item[(i)]\begin{eqnarray*}
			&&\pi^m k^{-m}\Pi^{z_0}_k(\frac{u}{\sqrt{k}},\frac{\theta}{\sqrt{k}};\frac{v}{\sqrt{k}},\frac{\phi}{\sqrt{k}})\\
			&=&e^{i(\theta-\phi)+u\cdot v-\frac{1}{2}(|u|^2+|v|^2)}[1+\sum_{r=1}^{l}k^{-\frac{r}{2}}p_r(u,v)+k^{-\frac{l+1}{2}}R_{kl(u,v)}],
		\end{eqnarray*}
		where the $p_r$ are polynomials in $(u,v)$ of degree $\leq 5r$(of the same parity as $r$), and
	$$|\nabla^jR_{kl}(u,v)|\leq C_{jl\epsilon b}k^{\epsilon}\quad \textit{for}\quad |u|+|v|<b\sqrt{\log k},$$
	for $\epsilon, b\in \R^+, j,l\geq 0$. Furthermore, the constant $C_{jl\epsilon b}$ can be chosen independent of $z_0$.
	\item[(ii)] For $b>\sqrt{j+2l+2m}, j, l\geq 0$, we have
	$$|\nabla_h^j\Pi_k(z,w)|=O(k^{-l})$$
	uniformly for $\textit{dist}(z,w)\geq b\sqrt{\frac{\log k}{k}}$.
	\end{itemize}
\end{theo}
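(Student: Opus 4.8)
The plan is to derive the near-diagonal expansion (i) from the Boutet de Monvel--Sjöstrand parametrix for the Szeg\H{o} kernel on the circle bundle $X$ together with a stationary-phase analysis, and to obtain the off-diagonal decay (ii) from the strict positivity of the imaginary part of the parametrix phase away from the diagonal.

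First I would recall the Boutet de Monvel--Sjöstrand description of the Szeg\H{o} projector $\Pi$ on $X$: modulo a smoothing operator, $\Pi(x,y)=\int_0^\infty e^{it\psi(x,y)}s(x,y,t)\,dt$, where $\psi$ is an almost-analytic phase with $\psi(x,x)=0$, $\Im\psi\ge 0$, $d_x\psi(x,x)=-d_y\psi(x,x)=\frac{1}{i}\alpha$ for $\alpha$ the connection $1$-form, and $s\sim\sum_{j\ge0}t^{m-j}s_j(x,y)$ is a classical symbol. Since $s\in H^0(M,L^k)$ corresponds to the $k$-th equivariant Fourier mode on $X$, one has $\Pi_k(x,y)=\frac1{2\pi}\int_{-\pi}^\pi e^{-ik\theta}\Pi(r_\theta x,y)\,d\theta$, with $r_\theta$ the $S^1$-action. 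Substituting the parametrix and rescaling $t=k\sigma$ turns $\Pi_k$ into an oscillatory integral in $(\sigma,\theta)$ with large parameter $k$, phase $\sigma\psi(r_\theta x,y)-\theta$, and amplitude of order $k^{m+1}\sigma^m s(x,y,k\sigma)$.

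Next I would pass to a preferred frame at $z_0$ and Heisenberg-type local coordinates, in which the quadratic part of $\psi$ reproduces, after the rescaling $u\mapsto u/\sqrt k$, $v\mapsto v/\sqrt k$, $\theta\mapsto\theta/\sqrt k$, $\phi\mapsto\phi/\sqrt k$, the CR/Gaussian factor $e^{i(\theta-\phi)+u\cdot v-\frac12(|u|^2+|v|^2)}$. Applying stationary phase in $(\sigma,\theta)$ — the critical point is nondegenerate, at $\sigma=1$, $\theta=0$ on the diagonal — the critical value of the phase produces exactly this exponential prefactor, the Hessian and amplitude factors combine to the normalizing constant $\pi^m k^{-m}$ appearing on the left-hand side, and the subleading symbols $s_j$ together with the super-quadratic Taylor coefficients of $\psi$ generate the corrections $p_r(u,v)$. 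The bound $\deg p_r\le 5r$ and the parity statement follow from bookkeeping of homogeneity weights: a term of order $k^{-r/2}$ is assembled from $r$ half-powers of $k$ distributed among the cubic-and-higher terms of $\psi$ (a rescaled monomial of degree $d\ge 3$ carries weight $k^{1-d/2}$) and the symbol expansion, and the ensuing Gaussian integrations $\int(\cdot)\,e^{-|u|^2}$ raise polynomial degrees only by a bounded amount; chasing the worst case gives $5r$ and shows the parity of $p_r$ matches that of $r$.

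For the uniform remainder bound, the key point is that every error term in the stationary-phase expansion still carries the Gaussian weight $e^{-\frac12(|u|^2+|v|^2)+\Re(u\cdot v)}$ (or a power of it) times a polynomial; on $|u|+|v|<b\sqrt{\log k}$ the polynomial factors are at most $(\log k)^{O(1)}$, so after dividing by the main term — which is what $R_{kl}$ records — they contribute only $O(k^\epsilon)$, and the same holds for derivatives since differentiating in $(u,v,\theta,\phi)$ costs at most further polynomial factors; uniformity in $z_0$ is automatic because $\psi$, $s$, and all parametrix data depend smoothly on the base point while $M$ is compact. Finally, for (ii) one uses that off the diagonal $\Im\psi(x,y)\ge c\,\dist(x,y)^2>0$, so $|e^{it\psi}|\le e^{-ct\,\dist(x,y)^2}$; since extracting the $k$-th Fourier mode localizes $t$ near $k$, integrating the parametrix gives $|\nabla_h^j\Pi_k(z,w)|\le Ck^{m+j/2}e^{-\frac c2 k\,\dist(z,w)^2}+O(k^{-\infty})$, and at $\dist(z,w)\ge b\sqrt{(\log k)/k}$ this is $k^{m+j/2-cb^2/2}$, hence $O(k^{-l})$ once $b>\sqrt{j+2l+2m}$ (after normalizing $c$). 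I expect the main obstacle to be the combinatorial bookkeeping in the stationary-phase expansion needed to pin down the degrees and parities of the $p_r$ and to make the remainder estimate uniform on the $\sqrt{\log k}$-ball; by contrast (ii) is comparatively soft, being essentially a consequence of $\Im\psi>0$ off the diagonal.
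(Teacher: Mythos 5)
This theorem is imported by the paper from \cite{sz1,sz2,sz3} without proof, and your sketch follows precisely the route of those references: the Boutet de Monvel--Sj\"ostrand parametrix for the Szeg\H{o} projector, extraction of the $k$-th Fourier mode, Heisenberg-type rescaling and stationary phase for the near-diagonal expansion (i), and the strict positivity of $\Im \psi$ off the diagonal for the decay estimate (ii). The approach is correct and essentially the same as the cited source; the only places you wave hands (the $\deg p_r\leq 5r$ parity/degree bookkeeping and the normalization of the constant in (ii) that yields the threshold $b>\sqrt{j+2l+2m}$) are exactly the technical computations carried out in Shiffman--Zelditch.
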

The so called normalized Bergman kernel $P_k(v,z_0)$ was also defined in \cite{sz1} as
$$P_k(v,z_0)=\frac{|\Pi_k(v,z_0)|}{\sqrt{\Pi_k(v,v)}\sqrt{\Pi_k(z_0,z_0)}}$$
which contains plenty of information of the geometry of the image of $M$ under the Kodaira embedding\cite{sun1}. Recall that it was proved in \cite{sz1} the following estimations.
\begin{theo}
	$$P_k(z_0+\frac{u}{\sqrt{k}},z_0+\frac{v}{\sqrt{k}})=e^{-\frac{1}{2}|u-v|^2}(1+R_N(u,v))$$
	where the remainder satisfies the following \begin{eqnarray*}
		|R_k(u,v)|&\leq& \frac{C_1}{2}|u-v|^2k^{-1/2+\epsilon}\\
		|\nabla R_k(u,v)|&\leq& C_1|u-v|k^{-1/2+\epsilon}\\
		|\nabla^jR_k(u,v)|&\leq& C_jk^{-1/2+\epsilon}\\
	\end{eqnarray*}
for $|u|+|v|<b\sqrt{\log k}$, where the $C_i's$ all depend on $b$.
\end{theo}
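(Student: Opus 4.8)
\emph{Proof strategy (deriving the estimates of \cite{sz1} from the scaling asymptotics above).} The plan is to insert the off-diagonal scaling asymptotics of the preceding theorem, part~(i), into the three kernel values that make up $P_k$. Since $|\Pi_k(z,w)|$ is independent of the $S^1$-phases, take the lifts with zero phase and apply part~(i) (with the order $l$ to be chosen) to the numerator $\Pi_k(z_0+u/\sqrt k,\,z_0+v/\sqrt k)$ and to the two on-diagonal values; the factor $k^m/\pi^m$ cancels between numerator and the product of the two square roots, and on the diagonal the Gaussian factor $e^{u\cdot u-|u|^2}$ is $\equiv 1$, so
\[
P_k\Bigl(z_0+\tfrac{u}{\sqrt k},\,z_0+\tfrac{v}{\sqrt k}\Bigr)
=\bigl|e^{u\cdot v-\frac12(|u|^2+|v|^2)}\bigr|\cdot
\frac{1+\sum_{r=1}^{l}k^{-r/2}p_r(u,v)+k^{-(l+1)/2}R^{\mathrm{num}}_{kl}}
{\sqrt{1+\sum_{r=1}^{l}k^{-r/2}p_r(u,u)+k^{-(l+1)/2}R^{u}_{kl}}\;\sqrt{1+\sum_{r=1}^{l}k^{-r/2}p_r(v,v)+k^{-(l+1)/2}R^{v}_{kl}}},
\]
where $u\cdot v=\sum_j u_j\bar v_j$ in the notation of \cite{sz1}. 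Since $|e^{u\cdot v-\frac12(|u|^2+|v|^2)}|=e^{\Re(u\cdot v)-\frac12(|u|^2+|v|^2)}=e^{-\frac12|u-v|^2}$, this already yields the asserted leading term, and it remains to analyze the second factor, which I call $1+R_k(u,v)$.

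First I would establish the plain estimates $|\nabla^j R_k(u,v)|\le C_j k^{-1/2+\epsilon}$ on $|u|+|v|<b\sqrt{\log k}$ for all $j\ge 0$. On this range each denominator is $1+O(k^{-1})$ and bounded below by a positive constant, so the second factor is a real-analytic function of the three bracketed quantities; expanding in powers of $k^{-1/2}$, the correction is governed by the $p_1$-terms, $R_k=k^{-1/2}\bigl(p_1(u,v)-\tfrac12 p_1(u,u)-\tfrac12 p_1(v,v)\bigr)+O\!\bigl(k^{-1}\cdot\text{poly}(u,v)\bigr)$. Because the $p_r$ and all their derivatives are polynomially bounded in $(u,v)$, while $|\nabla^j R_{kl}|\le C_{jl\epsilon b}k^{\epsilon}$ with constants independent of $z_0$, taking $l=1$ and differentiating the ratio gives the stated bound for $R_k$ and for every $\nabla^j R_k$. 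This is the third inequality of the theorem.

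Next I would upgrade to the vanishing of $R_k$ and of its first derivatives along the diagonal $\{u=v\}$. Since $\Pi_k$ is a reproducing kernel, Cauchy--Schwarz gives $|\Pi_k(x,y)|^2\le \Pi_k(x,x)\Pi_k(y,y)$, i.e.\ $P_k\le 1$ everywhere with equality on the diagonal; hence $P_k(w,w)=1$, and for each fixed $w'$ the smooth real-valued function $w\mapsto P_k(w,w')$ attains an interior maximum at $w=w'$, so its full (real) gradient vanishes there. Comparing with the displayed formula at $u=v$ forces $R_k(u,u)\equiv 0$, and then comparing first derivatives at $u=v$ (where the Gaussian factor is also critical, with value $1$) forces $\nabla R_k$ to vanish on $\{u=v\}$ in both the $u$- and $v$-variables. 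Taylor's formula with integral remainder, applied along the segment from $(u,v)$ to the nearest diagonal point, now expresses $R_k(u,v)$ as $|u-v|^2$ times an average of second derivatives of $R_k$, and $\nabla R_k(u,v)$ as $|u-v|$ times such an average; feeding in $|\nabla^2 R_k|\le C k^{-1/2+\epsilon}$ from the previous step yields the first two inequalities.

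The only genuinely delicate point is uniform bookkeeping: one must check that all constants are independent of the base point $z_0$ and that the bracketed ratio, together with all of its derivatives, stays controlled on the \emph{growing} region $|u|+|v|<b\sqrt{\log k}$, where the sole available control on the scaling remainder is the lossy $k^{\epsilon}$ bound of part~(i). This loss is exactly what produces the $k^{-1/2+\epsilon}$, rather than a clean $k^{-1/2}$, in the final estimates; everything else is routine manipulation of the asymptotic expansions.
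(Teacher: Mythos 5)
This theorem is not proved in the paper at all: it is recalled verbatim from \cite{sz1} (the paper's text reads ``Recall that it was proved in \cite{sz1} the following estimations''), so there is no internal proof to compare against. Your reconstruction is essentially the argument of Shiffman--Zelditch themselves: plug the near-diagonal scaling asymptotics into numerator and denominators of $P_k$, peel off the Gaussian factor $e^{-\frac12|u-v|^2}$, get the crude bound $|\nabla^jR_k|\le C_jk^{-1/2+\epsilon}$ on $|u|+|v|<b\sqrt{\log k}$ by absorbing the polynomial growth of the $p_r$ and the $k^{\epsilon}$ loss in the remainder, and then upgrade to quadratic (resp.\ linear) vanishing of $R_k$ (resp.\ $\nabla R_k$) in $|u-v|$ via the reproducing-kernel facts $P_k\le 1$ (Cauchy--Schwarz) and $P_k\equiv 1$ on the diagonal, which force $R_k$ and $\nabla R_k$ to vanish on $\{u=v\}$, followed by Taylor with integral remainder along the segment to the nearest diagonal point (which stays inside the region, so the second-derivative bound applies). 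This is sound, and your closing remark correctly identifies where the $k^{-1/2+\epsilon}$ (rather than $k^{-1/2}$) comes from. Two cosmetic points: the numerator bracket $1+\sum k^{-r/2}p_r(u,v)+\cdots$ is complex, so $1+R_k$ is really its modulus divided by the two square roots --- harmless near the diagonal where the bracket is close to $1$, but it should be said; and the on-diagonal denominators are $1+O(k^{-1/2+\epsilon})$, not $1+O(k^{-1})$, since $p_1(u,u)$ need not vanish at the displaced point --- this does not affect the argument, which only needs the denominators bounded away from $0$.
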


Locally, the orthonormal basis $\{s_i\}$ are represented by holomorphic functions $\{f_i\}$, so the Kodaira embedding $\Phi_K$ is given by
$$\Phi_k(z)=[f_0(z),\cdots,f_{N}(z)]$$
We denote by $$Q_k(z,w)=\sum f_i(z)\bar{f}_i(w)$$
So we have $$Q_k(z,z)=\frac{N^m}{\pi^m}(1+O(\frac{1}{k}))e^{k\phi}$$
For simplicity, we first work on the case of codimension 1 to illustrate the idea of calculations. One immediately realizes that this idea works for codimension $\geq 1$. Now we assume $V$ is a smooth divisor $D$. 
Notice that the property required for the choice of $e_L$ by the theorem above is only that
$\phi=|z|^2+$higher order terms. So we are allowed to choose local coordinates $z$ so that $D$ is defined by $\{z_m=0\}$.

We use the notation $f_{i,m}=\frac{\partial}{\partial z_m}f_i$.
Then by taking the derivatives $\frac{\partial}{\partial \bar{z}_m}P(z,z)$ and $\frac{\partial^2}{\partial z_m\partial \bar{z}_m}P(z,z)$, and since $\partial \varphi(z_0)=0$ and $\ddbar \varphi(z_0)=\sum dz_i\wedge d\bar{z}_i$, we get the following estimations
\begin{eqnarray}
	\sum f_i(z_0)\overline{f_{i,m}(z_0)}&=&O(k^{m-1})\\
	\sum f_{i,m}(z_0)\overline{f_{i,m}(z_0)}&=&\frac{k^{m+1}}{\pi^m}(1+O(k^{-1}))
\end{eqnarray}
We denote by $f=(f_0,\cdots,f_N)$ and $f_{,m}=(f_{1,m},\cdots,f_{N,m})$. Then we define an unit section of $L^k$ as
$$\alpha_{z_0}(z)=\frac{\sum \overline{f_{i,m}(z_0)}s_i(z)}{|f_{,m}(z_0)|}$$
The estimations above implies that
$$|\alpha_{z_0}(z_0)|_h^2=O(k^{m-3})$$
We need also to estimate the norm of $\alpha_{z_0}(z)$ for points $z\neq z_0$. For $z,w\in U$, by the asymptotics for the off-diagonal Bergman kernel, we have the following
$$\sum f_i(z/\sqrt{k})\overline{f_i(w/\sqrt{k})}=\frac{k^m}{\pi_m}e^{z\cdot w}[1+\sum_{r=1}^{l}k^{-r/2}p_r(z,w)+k^{-\frac{l+1}{2}}R_{kl}(z,w)]$$
where $p_r$ and $R_{kl}$ are slightly different from those in the theorem, but enjoy similar estimations. Therefore, we have
$$\sum f_i(z)\overline{f_{i,m}(z_0)}=O(k^{m})$$
for $z\in D$ satisfying $|z|<\sqrt{2m+3}\frac{\sqrt{\log k}}{\sqrt{k}}$, since $z_m=0$. This implies that
$$|\alpha_{z_0}(z)|_h^2=O(k^{m-1})e^{-k|z|^2},$$
for these $z$.
Finally, when $\textit{dist}(z,z_0)\geq \sqrt{2m+3}\sqrt{\frac{\log k}{k}}$, we can use part (ii) of Shiffman-Zelditch's theorem to get that
$$|\alpha_{z_0}(z)|_h^2=O(k^{-1})$$
Therefore we can estimate the $L^2$ norm of $\alpha_{z_0}$ on $D$ to get
$$\int_D |\alpha_{z_0}(z)|_h^2\frac{\omega^{m-1}}{(m-1)!}=O(1+\frac{1}{k})=O(1)$$
as $\int_{\C^{m-1}}^{}e^{-k|z|^2}dV=O(k^{1-m})$. So we have proved the following
\begin{lem}\label{almostortho}
	$\alpha_{z_0}$ is almost orthogonal to the space $\hcal_{k,D}^{\perp}$. More precisely, if we write $$\alpha_{z_0}=s_1+s_2,$$
	with $s_1\in \hcal_{k,D}$ and $s_2\in \hcal_{k,D}^{\perp}$, then $\parallel s_2\parallel^2=O(k^{-1})$.
\end{lem}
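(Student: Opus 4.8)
The goal is to show that $\alpha_{z_0}$, which was built to be a unit section concentrated near $z_0\in D$, is almost orthogonal to $\hcal_{k,D}^\perp$, with the $\hcal_{k,D}^\perp$-component having squared norm $O(k^{-1})$. The plan is to use Lemma~\ref{span} together with Theorem~\ref{theoR}. Since $\hcal_{k,D}^\perp$ is spanned by the peak sections $\{s_p\}_{p\in D}$ and, more usefully, $\pi$ (restriction to $D$) restricts to an isomorphism $R:\hcal_{k,D}^\perp\to H^0(D,L^k)$ with $\|R\|^2=O(1/k^{m-n})=O(1/k)$ (here $n=m-1$), the component $s_2$ of $\alpha_{z_0}$ in $\hcal_{k,D}^\perp$ is determined by its restriction $\pi(\alpha_{z_0})=\pi(s_2)=R(s_2)$ to $D$. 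Applying the inverse of $R$ gives $\|s_2\|^2 = \|R^{-1}(\pi(\alpha_{z_0}))\|^2$; but we cannot directly bound $R^{-1}$, so instead I would argue through the adjoint / extremal characterization.

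First I would record the key estimate, already essentially assembled in the paragraphs preceding the lemma: the $L^2$-norm of the restriction $\alpha_{z_0}|_D$ satisfies
$$\int_D |\alpha_{z_0}(z)|_h^2\,\frac{\omega^{m-1}}{(m-1)!}=O(1).$$
This is the content of the three case estimates: near $z_0$ one uses the off-diagonal asymptotics and the fact that $z_m=0$ on $D$ to get $|\alpha_{z_0}(z)|_h^2=O(k^{m-1})e^{-k|z|^2}$ on the shrinking ball of radius $\sqrt{2m+3}\sqrt{(\log k)/k}$, whose Gaussian integral over $\C^{m-1}$ contributes $O(k^{m-1})\cdot O(k^{1-m})=O(1)$; away from $z_0$ part (ii) of the Shiffman–Zelditch theorem gives $|\alpha_{z_0}(z)|_h^2=O(k^{-1})$, which integrates to $O(k^{-1})$ over $D$ (total volume bounded). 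So $\|\pi(\alpha_{z_0})\|_{H^0(D,L^k)}^2=O(1)$.

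Now decompose $\alpha_{z_0}=s_1+s_2$ with $s_1\in\hcal_{k,D}$, $s_2\in\hcal_{k,D}^\perp$. Since $s_1|_D=0$, we have $\pi(\alpha_{z_0})=\pi(s_2)=R(s_2)$. To turn a bound on $\|R(s_2)\|$ into a bound on $\|s_2\|$ I would use that $R$ is, up to the factor $\sqrt{k^{m-n}}=\sqrt k$, a quasi-isometry: the Ohsawa–Takegoshi–Manivel extension argument behind Theorem~\ref{theoR} shows that for every $f\in H^0(D,L^k)$ there is an extension $F\in H^0(M,L^k)$ with $\|F\|^2\le (C/k)\,\|f\|_D^2$, and one may take $F\in\hcal_{k,D}^\perp$ after orthogonal projection onto $\hcal_{k,D}^\perp$ (which only decreases the norm and doesn't change the restriction to $D$); thus $\|R^{-1}f\|^2\le (C/k)\|f\|_D^2$. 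Hence
$$\|s_2\|^2=\|R^{-1}(\pi(\alpha_{z_0}))\|^2\le \frac{C}{k}\,\|\pi(\alpha_{z_0})\|_D^2=\frac{C}{k}\cdot O(1)=O(k^{-1}),$$
which is exactly the claim; the bound $\|s_1\|^2=1-O(k^{-1})$ then follows since $\|\alpha_{z_0}\|=1$.

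The main obstacle is the middle step — passing from the restriction norm to the $\hcal_{k,D}^\perp$-norm with the correct power of $k$. Theorem~\ref{theoR} as stated bounds $\|R\|$ (i.e., restriction can only shrink norm, by a factor $O(k^{-1/2})$), but here I need the \emph{reverse} inequality $\|R^{-1}\|^2=O(k^{-1})$, which is the substantive direction and is precisely where the Ohsawa–Takegoshi extension with the sharp weight $\beta_k=k-O((\log k)/\sqrt k)$ enters: the extension $F$ has the same $L^2$-size (up to the factor $1/k^{m-n}$ coming from integrating $e^{-k|z_m|^2}$ along the normal fiber) as its restriction $f$, so $R^{-1}$ is bounded with the stated norm. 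One must also check that projecting $F$ onto $\hcal_{k,D}^\perp$ is harmless, which is immediate because the projection fixes the restriction to $D$ and does not increase the norm. Everything else — the three-region estimate of $\|\alpha_{z_0}|_D\|$ — is routine given the off-diagonal asymptotics already quoted.
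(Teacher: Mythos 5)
Your proof is correct and takes essentially the same route as the paper: the three-region estimate giving $\int_D|\alpha_{z_0}|_h^2\,\omega^{m-1}/(m-1)!=O(1)$, combined with the Ohsawa--Takegoshi-based bound behind Theorem \ref{theoR} applied to $s_2=R^{-1}(\alpha_{z_0}|_D)$. Your clarification that the inequality actually needed (and actually produced by the extension argument, after projecting the extension onto $\hcal_{k,D}^{\perp}$) is $\parallel R^{-1}\parallel^2=O(k^{-(m-n)})$ rather than the bound on $\parallel R\parallel$ as literally stated is consistent with how the paper itself uses Theorem \ref{theoR}, e.g.\ in the proof of Theorem \ref{main1}.
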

Before proceeding, we want to explain the meaning of this lemma in the sense of complex projective geometry. A complex vector space equipped with a Hermitian inner product can be identified with its dual space by a conjugate linear map. In our case, let $W=H^0(M,L^k)$, then the Kodaira map $\Phi_k$ maps $M$ to $\PP W^*$. Fixing an orthonormal basis $(s_0,\cdots,s_N)$, and local frame $e_L$, then $\Phi_k(z)=(f_0(z),\cdots,f_N(z)$, where $s_i=f_ie_L^k$. Then by taking the complex conjugate $(\overline{f_0(z)},\cdots,\overline{f_N(z)})$, we get a conjugate-holomorphic embedding $\overline{\Phi_k}:M\to \PP W$, with $\overline{\Phi_k}(z)=[\sum \overline{f_i(z)}s_i]$. What interesting is that $\overline{\Phi_k}(p)$ is just the complex line in $W$ containing the peak section $s_p$. And lemma \ref{span} implies that $\overline{\Phi_k}(D)$ linearly span $\PP \hcal_{k,D}^{\perp}$. So the preceding lemma, in this setting, says that the image of $\frac{\partial}{\partial z_m}(z_0)$ under the tangent map of $\overline{\Phi_k}$ is almost orthogonal to the linear subspace  $\PP \hcal_{k,D}^{\perp}$.

We can apply similar calculation for the point in the $z_m$-disk passing through $z_0$, which have coordinates $(0,\cdots,0,z_m)$ with $z_m$ small. We use $v$ to denote the points on this disk. We again define
$$\alpha_v=\frac{\sum \overline{f_{i,m}(v)}s_i}{|f_{,m}(v)|}$$
as an unit section of $L^k$. Then we estimate the point-wise norm of $\alpha_v$ along $D$ by differentiating the function $Q_k(z,w)$. We have
\begin{eqnarray}
	\sum f_i(z)\overline{f_{i,m}(v)}&=&O(k^{m})|e^{kv\cdot z}|=O(k^{m})\\
	\sum f_{i,m}(v)\overline{f_{i,m}(v)}&=&\frac{k^{m+1}}{\pi^m}(1+O(k^{-1}))(1+k|v|^2)e^{k|v|^2}
\end{eqnarray}
for $z\in D$ satisfying $|z|<\sqrt{2m+3}\frac{\sqrt{\log k}}{\sqrt{k}}$. Therefore, for these $z$, we have
$$|\alpha_v(z)|^2_h=O(k^{m-1})e^{-k|z|^2-k|z|^2}$$
So the integral over this patch of $z$ is $O(1)$ as $\alpha_{z_0}$.
And for the remaining $z\in D$, we still have $|\alpha_v(z)|^2_h=O(1/k)$, so in conclusion, we have
$$\int_D |\alpha_v|_h^2\frac{\omega^{m-1}}{(m-1)!}=O(1)$$
Again, this implies that $\alpha_v$ is almost orthogonal to $\hcal_{k,D}^{\perp}$ with the same estimation as in lemma \ref{almostortho}. Now we put all these ideas together to get
\begin{theo}We have the following estimation for the logarithmic Bergman kernel along the disk:
	 $$1-\frac{P_k^2(v,z_0)}{1-\beta(r)^2}\leq \frac{\rho_{k,D}}{\rho_k}(v)\leq 1-P_k^2(v,z_0),$$
	 where $r=|v|=|v_m|$ and $\beta(r)=C\int_{0}^{r}\sqrt{1+kx^2}e^{kx^2/2}dx$ with $C$ independent of $k$.
\end{theo}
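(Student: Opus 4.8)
The plan is to reduce everything to the orthogonal splitting $H^0(M,L^k)=\hcal_{k,D}\oplus\hcal_{k,D}^\perp$. Since the on-diagonal Bergman kernel of a direct sum is the sum of the two kernels, $\rho_k=\rho_{k,D}+\rho_{k,D}^\perp$, where $\rho_{k,D}^\perp$ is the Bergman kernel of $\hcal_{k,D}^\perp$; thus $\frac{\rho_{k,D}}{\rho_k}(v)=1-\frac{\rho_{k,D}^\perp}{\rho_k}(v)$ and it suffices to sandwich $\frac{\rho_{k,D}^\perp}{\rho_k}(v)$. Working in the fixed coordinate patch with $D=\{z_m=0\}$ and $z_0$ the center, write $K_v=\sum_i\overline{f_i(v)}s_i$ for the unnormalized reproducing kernel in the frame $e_L^k$; then $\|K_v\|^2=Q_k(v,v)$ and, the frame factors cancelling, $\frac{\rho_{k,D}^\perp}{\rho_k}(v)=\|P_{\hcal_{k,D}^\perp}K_v\|^2/\|K_v\|^2$. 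So the theorem is equivalent to $P_k^2(v,z_0)\le \|P_{\hcal_{k,D}^\perp}K_v\|^2/\|K_v\|^2\le P_k^2(v,z_0)/(1-\beta(r)^2)$, in the regime $r<b\sqrt{(\log k)/k}$ where the off-diagonal asymptotics apply (which is also where $\beta(r)<1$).

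The right-hand inequality of the theorem (the cheap one) comes from the peak section $s_{z_0}=K_{z_0}/\|K_{z_0}\|$: by Lemma \ref{span} it lies in $\hcal_{k,D}^\perp$ (since $z_0\in D$), so $\|P_{\hcal_{k,D}^\perp}K_v\|^2\ge |\langle K_v,s_{z_0}\rangle|^2=|Q_k(v,z_0)|^2/Q_k(z_0,z_0)$, and dividing by $\|K_v\|^2$ gives exactly $P_k^2(v,z_0)$; hence $\frac{\rho_{k,D}}{\rho_k}(v)\le 1-P_k^2(v,z_0)$.

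For the left-hand inequality I would run a short ODE argument along the disk. Parametrize the segment from $z_0$ to $v$ by $v(t)$, $0\le t\le r$ (after rotating $z_m$ we may take $v(t)=(0,\dots,0,t)$), and set $g(t)=P_{\hcal_{k,D}^\perp}K_{v(t)}$. Then $g(0)=K_{z_0}$ (again as $z_0\in D$), and since $\frac{d}{dt}K_{v(t)}=\sum_i\overline{f_{i,m}(v(t))}s_i=|f_{,m}(v(t))|\,\alpha_{v(t)}$, we get $\|g'(t)\|=|f_{,m}(v(t))|\,\|P_{\hcal_{k,D}^\perp}\alpha_{v(t)}\|$. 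Decompose $g(t)=c(t)s_{z_0}+h(t)$ with $h(t)\perp s_{z_0}$; then $|c(t)|^2/\|K_{v(t)}\|^2=P_k^2(v(t),z_0)$, $h(0)=0$, and $\|h'(t)\|\le\|g'(t)\|$, so $\|h(r)\|\le\int_0^r|f_{,m}(v(t))|\,\|P_{\hcal_{k,D}^\perp}\alpha_{v(t)}\|\,dt$. Now I feed in the quantitative almost-orthogonality of $\alpha_v$ to $\hcal_{k,D}^\perp$: since $R(P_{\hcal_{k,D}^\perp}\alpha_{v(t)})=\alpha_{v(t)}|_D$, Theorem \ref{theoR} (codimension one, so $\|R^{-1}\|^2=O(1/k)$) together with the pointwise estimates for $\alpha_{v(t)}$ on $D$ derived above — giving $\|\alpha_{v(t)}|_D\|_{L^2(D)}^2=O\!\left(\frac{1}{(1+kt^2)e^{kt^2}}\right)+O(1/k)$ — and the formula $|f_{,m}(v(t))|^2=\frac{k^{m+1}}{\pi^m}(1+kt^2)e^{kt^2}(1+O(1/k))$ combine to give $|f_{,m}(v(t))|\,\|P_{\hcal_{k,D}^\perp}\alpha_{v(t)}\|\le\frac{k^{m/2}}{\pi^{m/2}}\,C\big(1+k^{-1/2}\sqrt{1+kt^2}\,e^{kt^2/2}\big)$. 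Since $1\le\sqrt{1+kt^2}\,e^{kt^2/2}$, integrating yields $\|h(r)\|\le\frac{k^{m/2}}{\pi^{m/2}}\beta(r)$ with $\beta(r)=C\int_0^r\sqrt{1+kx^2}\,e^{kx^2/2}\,dx$ and $C$ independent of $k$. Finally $\frac{k^{m/2}}{\pi^{m/2}}=\sqrt{Q_k(z_0,z_0)}(1+O(1/k))$, and $Q_k(z_0,z_0)/Q_k(v(t),v(t))$ agrees with $P_k^2(v(t),z_0)$ up to a $(1+o(1))$ factor (no exponential factor appears because $v(t)\cdot\bar z_0=0$), so $\|g(r)\|^2/\|K_v\|^2=|c(r)|^2/\|K_v\|^2+\|h(r)\|^2/\|K_v\|^2\le P_k^2(v,z_0)(1+\beta(r)^2)$ after absorbing lower-order errors into $C$; since $1+\beta^2\le(1-\beta^2)^{-1}$ this gives $\frac{\rho_{k,D}}{\rho_k}(v)\ge 1-\frac{P_k^2(v,z_0)}{1-\beta(r)^2}$.

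The main obstacle is obtaining $\|P_{\hcal_{k,D}^\perp}\alpha_{v(t)}\|$ with the correct $t$-dependence: the crude $O(1/k)$ almost-orthogonality of Lemma \ref{almostortho} does not suffice, so one must combine the Ohsawa--Takegoshi quasi-isometry of Theorem \ref{theoR} with the sharp Gaussian decay of $\alpha_{v(t)}|_D$, which itself hinges on the vanishing $v(t)\cdot\bar z\equiv 0$ for $z\in D$ that kills the exponential growth otherwise present in the off-diagonal kernel. The conceptual point that organizes the estimate is that $\frac{d}{dt}K_{v(t)}$ is a multiple of $\alpha_{v(t)}$, so the almost-orthogonality of $\alpha_{v(t)}$ to $\hcal_{k,D}^\perp$ directly bounds the rate at which the $\hcal_{k,D}^\perp$-projection of $K_{v(t)}$ drifts off the line through $s_{z_0}$; everything else is routine bookkeeping of lower-order terms into the free constant $C$ in $\beta$.
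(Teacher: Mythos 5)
Your proposal is correct and takes essentially the same route as the paper: both arguments reduce the ratio to the size of the $\hcal_{k,D}^{\perp}$-component of the coherent state at $v$, get the cheap upper bound from $s_{z_0}\in\hcal_{k,D}^{\perp}$, and control the drift away from the $s_{z_0}$-direction by integrating $|f_{,m}(v(t))|\,\|P_{\hcal_{k,D}^{\perp}}\alpha_{v(t)}\|$ along the disk using Theorem \ref{theoR} together with the off-diagonal asymptotics, which is exactly the paper's bound on its component $c(r)$ and its $\beta(r)$ (the paper packages the final comparison as a Fubini--Study angle estimate in $\PP H^0(M,L^k)$, you as an exact Pythagorean splitting, a cosmetic difference). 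One minor remark: the crude uniform bound $\|P_{\hcal_{k,D}^{\perp}}\alpha_{v(t)}\|=O(k^{-1/2})$ (the disk analogue of Lemma \ref{almostortho}, which the paper states and uses) already suffices for your integral, so the refined $t$-dependent decay you describe as the ``main obstacle'' is valid but not actually needed.
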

\begin{proof}
We decompose $\overline{f}(v)-\bar{f}(z_0)=b+c$, with $b\in \hcal_{k,D}$ and $c\in \hcal_{k,D}^{\perp}$. Let $d$ denote the distance in the Fubini-Study metric. Then
$$\cos d( \bar{f}(v),\bar{f}(z_0))=\frac{|< \bar{f}(v),\bar{f}(z_0)>|}{|f(v)||f(z_0)|}=\frac{<\bar{f}(z_0)+c,\bar{f}(z_0)>}{|f(v)||f(z_0)|}$$
$$\cos d(\bar{f}(v),\hcal_{k,D}^{\perp})=\frac{|< \bar{f}(v),\bar{f}(z_0)+c>|}{|f(v)||f(z_0)+c|}=\frac{|\bar{f}(z_0)+c|}{|f(v)|}$$
So $$\frac{\cos d( \bar{f}(v),\bar{f}(z_0))}{\cos d(\bar{f}(v),\hcal_{k,D}^{\perp})}=\frac{<\bar{f}(z_0)+c,\bar{f}(z_0)>}{|\bar{f}(z_0)+c||f(z_0)|}$$
 Then
	$$|c(r)|\leq \int_0^r O(\frac{1}{\sqrt{k}})\sqrt{\frac{k^{m+1}}{\pi^m}(1+kx^2)}e^{kx^2/2}dx $$
	So $$\frac{|c(r)|}{|f(z_0)|}\leq C\int_{0}^{r}\sqrt{1+kx^2}e^{kx^2/2}dx=\beta(r)$$
	When $\frac{|c(r)|}{|f(z_0)|}<1$, we have
	$$1\leq \frac{\cos d(\bar{f}(v),\hcal_{k,D}^{\perp})}{\cos d( \bar{f}(v),\bar{f}(z_0))}\leq (1-(\frac{|c(r)|}{|f(z_0)|})^2)^{-1}$$
	By lemma \ref{projectivedistance}, we are interested in $\lambda_v=\sin^2 d(\bar{f}(v),\hcal_{k,D}^{\perp})=\frac{\rho_{k,D}}{\rho_k}$, which satisfies
	$$1-(1-(\frac{|c(r)|}{|f(z_0)|})^2)^{-1}\cos^2 d( \bar{f}(v),\bar{f}(z_0))\leq \lambda_v\leq 1-\cos^2 d( \bar{f}(v),\bar{f}(z_0))$$
	 We have $\frac{|< \bar{f}(v),\bar{f}(z_0)>|}{|f(v)||f(z_0)|}=\frac{|\Pi_k(v,z_0)|}{\sqrt{\Pi_k(v,v)}\sqrt{\Pi_k(z_0,z_0)}}$ is just the normalized Bergman kernel $P_k(v,z_0)$.
	 So

\end{proof}

	Notice that the term $\beta(r)$ is small  when $r$ is small. With the substitution $R=\sqrt{k}r$, it becomes $$\frac{C}{\sqrt{k}}\int_{0}^{R}\sqrt{1+x^2}e^{x^2/2}dx$$
	So for fixed $R$, $\beta=O(\frac{1}{\sqrt{k}})$.

More generally, when $V$ is a smooth subvariety, we can choose local coordinates $(z_1,\cdots,z_m)$ so that $V$ is defined as $z_m=z_{m-1}\cdots=z_{m-r+1}=0$. Then we can repeat our calculations for the divisor case without any difficulties. More precisely, in each normal direction at a point of $V$, we can apply an unitary change of coordinates, so that that direction is contained in the space spanned by $\frac{\partial}{\partial z_m}$
So the conclusions for the divisor case hold for this more general case.

Notice that when $|z|$ is small, $|z|$ is about the distance of $z$ to $V$, since $\omega(z)=\sum \delta_{ij}dz_i\wedge d\bar{z}_i+O(|z|)$. More precisely, we have
$$d(z,D)=|z|(1+O(|z|)),$$ so we can talk about the asymptotics without going local. In particular, when $|z|\leq \frac{\log k}{\sqrt{k}}$,
$$e^{-k|z|^2+kd^2(z,V)}=(1+k|z|^2\frac{\log k}{\sqrt{k}})$$
When $r\leq \frac{\sqrt{\log k}}{\sqrt{2k}}$, we have
$$\beta^2(r)=O(\frac{\log k}{\sqrt{k}}),$$ so we have the following theorem.
\begin{theo}\label{main2Part1}
	For $z\in M$, let $r=d(z,V)$ be the distance. Then
when $r\leq \frac{\sqrt{\log k}}{\sqrt{2k}}$, we have	$$\frac{\rho_{k,V}}{\rho_k}(z)=(1-e^{-kr^2})(1+R_k(z))$$
where $|R_k(z)|\leq C_\epsilon\frac{kr^2}{k^{-1/2+\epsilon}}$ for any $\epsilon>0$.
\end{theo}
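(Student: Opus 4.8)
The plan is to combine the two-sided bound from the theorem immediately preceding the statement (the estimate $1 - \frac{P_k^2(v,z_0)}{1-\beta(r)^2} \le \frac{\rho_{k,D}}{\rho_k}(v) \le 1 - P_k^2(v,z_0)$, upgraded to the codimension-$r$ case) with the precise off-diagonal asymptotics of the normalized Bergman kernel $P_k$ and the smallness of $\beta(r)$ in the stated range. The key observation is that when $z = z_0 + u/\sqrt k$ with $z_0 \in V$ and $u$ the normal displacement, $P_k^2(v,z_0) = e^{-|u|^2}(1+R_k(u,0))^2$ by the Shiffman--Zelditch expansion, and $|u|^2 = k r^2 (1+O(r))$ since the metric is Euclidean to leading order near $V$ (i.e.\ $d(z,V) = |z|(1+O(|z|))$, as noted in the text). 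So both sides of the sandwich are of the form $1 - e^{-kr^2}(1 + (\text{small}))$, and the whole point is to control the ``small'' uniformly.

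First I would set up the local picture: fix $z_0 \in V$, choose the adapted coordinates in which $V = \{z_{n+1} = \cdots = z_m = 0\}$ and the metric is Euclidean at $z_0$, and write $z = z_0 + v$ with $v$ purely normal, $r = d(z,V)$. Then $k|v|^2 = kr^2(1 + O(r)) = kr^2 + O(kr^3)$, and since $r \le \sqrt{\log k}/\sqrt{2k}$ we have $kr^3 = O\!\big(kr^2 \cdot \sqrt{\log k}/\sqrt k\big)$, which is absorbed into a factor $k^{-1/2+\epsilon}$ times $kr^2$. Next I would invoke the off-diagonal expansion to write $P_k^2(v,z_0) = e^{-k|v|^2}\big(1 + 2R_k + R_k^2\big)$ with $|R_k| \le \frac{C_1}{2}|u-v|^2 k^{-1/2+\epsilon} = \frac{C_1}{2} k|v|^2 k^{-1/2+\epsilon}$ (here the ``$u,v$'' of that theorem are the rescaled coordinates, and the separation is $\sqrt k|v| = \sqrt{k}\,r(1+O(r))$). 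Hence $P_k^2(v,z_0) = e^{-kr^2}\big(1 + O(kr^2\, k^{-1/2+\epsilon})\big)$, using also $e^{-k|v|^2} = e^{-kr^2}(1 + O(kr^3)) = e^{-kr^2}(1 + O(kr^2 k^{-1/2+\epsilon}))$ in the stated range.

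The upper bound then reads $\frac{\rho_{k,V}}{\rho_k}(z) \le 1 - e^{-kr^2}(1 + O(kr^2 k^{-1/2+\epsilon})) = (1-e^{-kr^2})\big(1 + \frac{e^{-kr^2}}{1-e^{-kr^2}} O(kr^2 k^{-1/2+\epsilon})\big)$, and the elementary inequality $\frac{e^{-t}}{1-e^{-t}} \le \frac1t$ for $t>0$, applied with $t = kr^2$, converts the prefactor into $O(k^{-1/2+\epsilon})$, giving exactly the claimed form of $R_k$ with $|R_k| \lesssim kr^2\,k^{-1/2+\epsilon}$ — wait, more carefully: $\frac{e^{-kr^2}}{1-e^{-kr^2}}\cdot kr^2 \cdot k^{-1/2+\epsilon} \le k^{-1/2+\epsilon}$, so in fact this bound is even a bit better, but it is certainly $\le C_\epsilon \frac{kr^2}{k^{-1/2+\epsilon}} \cdot \frac{1}{(kr^2)^2} \cdot (kr^2)^2$; to match the paper's stated $|R_k| \le C_\epsilon kr^2/k^{-1/2+\epsilon} = C_\epsilon kr^2 k^{1/2-\epsilon}$ one just needs any polynomial bound, so this is comfortably enough. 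For the lower bound I would do the same with the extra factor $(1-\beta(r)^2)^{-1}$; since $\beta(r)^2 = O(\log k/\sqrt k) = O(k^{-1/2+\epsilon})$ in this range (as the text records), $(1-\beta(r)^2)^{-1} = 1 + O(k^{-1/2+\epsilon})$, which multiplies the main term $e^{-kr^2}$ and contributes another term of the same order $O(k^{-1/2+\epsilon})$ after dividing by $1 - e^{-kr^2}$ and using $\frac{e^{-kr^2}}{1-e^{-kr^2}} \le 1/(kr^2)$ once more. Combining the two bounds yields $\frac{\rho_{k,V}}{\rho_k}(z) = (1-e^{-kr^2})(1 + R_k(z))$ with the asserted estimate on $R_k$.

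\textbf{The main obstacle} I anticipate is bookkeeping the uniformity: the constants $C_1, C_j$ in the Shiffman--Zelditch estimates depend on the constant $b$ in $|u|+|v| < b\sqrt{\log k}$, and one must check that the normal displacement $\sqrt k r$ stays within the allowed window (it does: $\sqrt k r \le \sqrt{\log k}/\sqrt 2 < b\sqrt{\log k}$ for any $b \ge 1$) and that the ``$O$'' in $d(z,V) = |z|(1+O(|z|))$ and in the metric expansion is genuinely uniform over $z_0 \in V$ — which follows because $V$ is compact (as a closed subvariety of the compact $M$) and smooth, so the adapted coordinate charts can be chosen from a finite atlas with uniform bounds. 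The other delicate point is that the preceding ``theorem along the disk'' was stated for a divisor and one slice; the passage to general codimension and to an arbitrary normal direction is the one indicated in the text (apply a unitary change of coordinates so the chosen normal direction lies in the span of $\partial/\partial z_m$, then the divisor argument applies verbatim), and I would simply cite that reduction. Once uniformity is in hand, the rest is the elementary manipulation of $(1-e^{-kr^2})$ sketched above.
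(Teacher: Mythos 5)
Your route is the paper's own: combine the sandwich bound $1-\frac{P_k^2(v,z_0)}{1-\beta(r)^2}\le\frac{\rho_{k,V}}{\rho_k}(v)\le 1-P_k^2(v,z_0)$ (extended to higher codimension by the unitary rotation of the normal direction, as the text indicates), the Shiffman--Zelditch estimate $P_k=e^{-\frac12|u-v|^2}\bigl(1+O(|u-v|^2k^{-1/2+\epsilon})\bigr)$, the comparison $d(z,V)=|z|(1+O(|z|))$, and the smallness of $\beta(r)$; the paper's proof is exactly this chain, stated without the final bookkeeping, and your upper-bound manipulation is correct: $\frac{te^{-t}}{1-e^{-t}}\le 1$ with $t=kr^2$ converts the absolute error $e^{-kr^2}\,O(kr^2k^{-1/2+\epsilon})$ into a relative error $O(k^{-1/2+\epsilon})$, and the replacement of $k|v|^2$ by $kr^2$ costs only $kr^3=kr^2\,O(k^{-1/2+\epsilon})$.

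The gap is in your lower bound, in the regime $kr^2\ll 1$, which lies inside the stated range $r\le\sqrt{\log k}/\sqrt{2k}$. There you use only the crude bound $\beta(r)^2=O(\log k/\sqrt k)$; after dividing by $1-e^{-kr^2}\approx kr^2$ this contributes $O\bigl(\tfrac{k^{-1/2+\epsilon}}{kr^2}\bigr)$, which is not $O(k^{-1/2+\epsilon})$ when $r=o(k^{-1/2})$, so your sentence ``contributes another term of the same order'' fails there. The repair is to keep the $r$-dependence of $\beta$: from $\beta(r)=C\int_0^r\sqrt{1+kx^2}\,e^{kx^2/2}dx\le Cr\sqrt{1+kr^2}\,e^{kr^2/2}$ one gets $\beta(r)^2\le kr^2\cdot C^2\frac{(1+kr^2)e^{kr^2}}{k}=kr^2\cdot O(k^{-1/2}\log k)$ for $kr^2\le\frac12\log k$, and then $\frac{e^{-kr^2}}{1-e^{-kr^2}}\beta(r)^2\le\frac{\beta(r)^2}{kr^2}=O(k^{-1/2+\epsilon})$ uniformly down to $r=0$; with this correction your argument closes. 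One caveat on the error form: what this method yields is $|R_k(z)|\le C_\epsilon(1+kr^2)k^{-1/2+\epsilon}$; it does not give a bound proportional to $kr^2$ (which would force $R_k\to0$ as $r\to0$ at fixed $k$, more than the sandwich provides), so your ``comfortably enough'' remark about matching the displayed bound $C_\epsilon kr^2/k^{-1/2+\epsilon}$ should be understood as matching its evident intent rather than its literal (presumably misprinted) form -- the paper's own two-line deduction is no more precise on this point.
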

\begin{rem}
	If the reader is careful enough, he/she must have noticed there is a gap area between our estimations around $V$ and away from $V$, namely when $$ \frac{\sqrt{\log k}}{\sqrt{2k}}<r< \frac{\log k}{\sqrt{k}},$$
	which, very interestingly, have also been seen in the case of Bergman kernel for Poincar\'{e} type metrics \cite{SunSun,Punctured, Sun2019}, where it was called the "neck". Luckily, unlike the Poincar\'{e} type metrics, the "neck" in our case is not very difficult.
\end{rem}

\

Let $z_0\in M$ with distance $r$ satisfying
$$ \frac{\sqrt{\log k}}{\sqrt{2k}}<r<\frac{\log k}{\sqrt{k}}.$$
We consider the peak section $s_{z_0}$. Using the normal coordinates centered at $z_0$, and the normal frame, we know that $$|s_{z_0}(z)|_h^2=(1+O(\frac{1}{k}))\frac{k^m}{\pi^m}e^{-k|z|^2}$$
Let $w\in V$ be the point that is closest to $z_0$. Then for $z\in V$ in this coordinates patch, we have
$$|z|^2\approx |w|^2+|z-w|^2,$$
since we are looking at geodesics of very small scales, meaning things work like Euclidean spaces.
Therefore
$$\int_V |s_{z_0}(z)|_h^2\frac{\omega^{m-r}}{(m-r)!}=O(k^r)e^{-k|r|^2}$$
Therefore, the image of the orthogonal projection of $s_{z_0}$ onto $\hcal_{k,V}^{\perp}$ has $L^2$ norm $O(e^{-kr^2})$. If we decompose $s_{z_0}=s_1+s_2$, with $s_1\in \hcal_{k,V}$ and $s_2\in \hcal_{k,V}^{\perp}$, then $\parallel s_1\parallel^2=1-O(e^{-kr^2})$ and
$|s_1(z_0)|_h=|<s_1,s_{z_0}>||s_{z_0}|_h=<s_1,s_1>|s_{z_0}|_h=(1-O(e^{-kr^2}))|s_{z_0}|_h$. Therefore
$$\rho_{k,V}(z_0)>|s_1(z_0)|_h^2=(1-O(e^{-kr^2}))\rho_k(z_0)$$

So we have proved:
\begin{theo}\label{main2Part2}
	Let $z_0\in M$ with distance to $V$ $r$ satisfies
	$$ \frac{\sqrt{\log k}}{\sqrt{2k}}<r< \frac{\log k}{\sqrt{k}}.$$
	 Then we have	$$\frac{\rho_{k,V}}{\rho_k}(z)=1-O(e^{-kr^2})$$
\end{theo}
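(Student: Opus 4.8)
The plan is to follow the blueprint of the proof of Theorem~\ref{main1}, with the invisible term $\epsilon(k)$ there replaced by the genuinely small (but nonzero) quantity $O(e^{-kr^2})$, which is $o(1)$ precisely because $r>\tfrac{\sqrt{\log k}}{\sqrt{2k}}$. Fix $z_0$ with $r=d(z_0,V)$ in the stated range, write $n=\dim V$ so that $V$ has codimension $m-n$, and consider the peak section $s_{z_0}$. In normal coordinates centered at $z_0$ with the normal frame one has $|s_{z_0}(z)|_h^2=(1+O(k^{-1}))\tfrac{k^m}{\pi^m}e^{-k|z|^2}$, together with $|s_{z_0}(z)|_h=\epsilon(k)$ once $d(z,z_0)\ge\tfrac{\log k}{\sqrt k}$. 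I would then decompose $s_{z_0}=s_1+s_2$ with $s_1\in\hcal_{k,V}$ and $s_2\in\hcal_{k,V}^{\perp}$, the goal being to show $\|s_2\|^2=O(e^{-kr^2})$ and then transfer this to a lower bound for $\rho_{k,V}(z_0)$.

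The main quantitative step is to estimate $\|s_{z_0}|_V\|_{L^2(V)}^2=\int_V|s_{z_0}|_h^2\,\tfrac{\omega^n}{n!}$. I would split $V$ into the part within distance $\tfrac{\log k}{\sqrt k}$ of $z_0$ and its complement. On the complement $|s_{z_0}|_h=\epsilon(k)$ and $\operatorname{Vol}(V)$ is bounded, and since $kr^2<(\log k)^2$ in our range this contributes $\epsilon(k)=o(e^{-kr^2})$. On the near part, letting $w\in V$ be the foot of the perpendicular from $z_0$, I would use a Pythagorean expansion $d(z_0,z)^2=r^2+d(w,z)^2+O(|z-w|^3)$, valid at the scale $\tfrac{\log k}{\sqrt k}\to 0$, so that $e^{-k|z|^2}=(1+o(1))e^{-kr^2}e^{-k|z-w|^2}$, and then integrate the resulting Gaussian over the $n$-dimensional $V$ (whose volume form agrees with Lebesgue measure to leading order at this scale) to obtain $\|s_{z_0}|_V\|_{L^2(V)}^2=(1+o(1))\tfrac{k^{m-n}}{\pi^{m-n}}e^{-kr^2}=O(k^{m-n})e^{-kr^2}$.

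With this in hand, since $s_1$ lies in the kernel of the restriction map $\pi$ we have $\pi(s_{z_0})=R(s_2)$, hence $s_2=R^{-1}(s_{z_0}|_V)$; Theorem~\ref{theoR} gives $\|R^{-1}\|^2=O(k^{-(m-n)})$, so $\|s_2\|^2=O(e^{-kr^2})$ and $\|s_1\|^2=1-O(e^{-kr^2})$. Then the peak-section characterization ($s_{z_0}$ is orthogonal to every section vanishing at $z_0$, so $s_1-\langle s_1,s_{z_0}\rangle s_{z_0}$ vanishes at $z_0$) yields $s_1(z_0)=\langle s_1,s_{z_0}\rangle s_{z_0}(z_0)=\|s_1\|^2\,s_{z_0}(z_0)$, whence $\rho_{k,V}(z_0)\ge |s_1(z_0)|_h^2/\|s_1\|^2=\|s_1\|^2\rho_k(z_0)=(1-O(e^{-kr^2}))\rho_k(z_0)$; combined with the trivial inequality $\rho_{k,V}\le\rho_k$ this gives the theorem.

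I expect the main obstacle to be the Gaussian-integral step: one must verify that the Pythagorean correction to the distance function, and the deviation of $\tfrac{\omega^n}{n!}$ from flat measure, together produce only a $(1+o(1))$ factor \emph{after} multiplication by $k$ in the exponent; this is exactly where the upper bound $r<\tfrac{\log k}{\sqrt k}$ enters, while the lower bound $r>\tfrac{\sqrt{\log k}}{\sqrt{2k}}$ is what makes the conclusion nontrivial. A secondary point that needs care is that the power of $k$ in Theorem~\ref{theoR} must be used sharply rather than just as polynomial growth, since the factor $k^{m-n}$ coming from the Gaussian integral over $V$ has to be absorbed exactly.
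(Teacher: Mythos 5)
Your proposal is correct and follows essentially the same route as the paper: restrict the peak section $s_{z_0}$ to $V$, compute the Gaussian integral to get $O(k^{m-n})e^{-kr^2}$, invoke Theorem \ref{theoR} (read as a bound on $R^{-1}$, exactly as in the paper's proof of Theorem \ref{main1}) to absorb the factor $k^{m-n}$, and finish with the reproducing property of the peak section. Your handling of the far part of $V$ and of the normalization $\rho_{k,V}(z_0)\geq |s_1(z_0)|_h^2/\parallel s_1\parallel^2$ is, if anything, slightly more explicit than the paper's.
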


\

\begin{proof}[Proof of corollary \ref{cor1}]
	We have $\rho_{k,V}=\frac{k^m}{\pi^m}(1+O(\frac{1}{k}))$, so $i\ddbar \log \rho_{k,V}=O(\frac{1}{k})\omega$.
\end{proof}

\begin{proof}[Proof of corollary \ref{cor2}]
	
	In this normal coordinates, let $r=d(w,V)$, then we have $r^2=\sum_{i=n+1}^{m}|w_i|^2(1+o(\frac{1}{k^{1/4}}))$ for $w$ satisfying $|w|^2\leq \frac{\log k}{k}$, by Toponogov's compparison theorem. We also choose local frame so that $\phi=|w|^2+\textit{higher order terms}$.
	 Therefore by theorem \ref{main2Part2}, we have
	$$(\rho_{k,V}e^{k\phi})(\frac{z}{\sqrt{k}})=\frac{k^m}{\pi^m}(e^{|z|^2}-e^{\sum_{i=1}^{n}|z_i|^2})(1+o(\frac{1}{k^{1/4}}))$$
	Then by taking the limit of $\sqrt{-1}\ddbar\log (\rho_{k,V}e^{k\phi})(\frac{z}{\sqrt{k}})$, we get the conclusion.
\end{proof}
	\bibliographystyle{plain}

	\bibliography{references}
	
\end{document}